\numberwithin{equation}{section}
\theoremstyle{plain}
\newtheorem{thm}{Theorem}[section]
\newtheorem{prop}[thm]{Proposition}
\newtheorem{cor}[thm]{Corollary}
\newtheorem{lem}[thm]{Lemma}
\theoremstyle{definition}
\newtheorem{exa}[thm]{Example}
\newtheorem{rem}[thm]{Remark}
\newtheorem{defi}[thm]{Definition}
\DeclareMathOperator*{\real}{\mathbb{R}}
\DeclareMathOperator*{\com+}{\mathbb{C}_{+}}
\DeclareMathOperator*{\comp}{\mathbb{C}}
\DeclareMathOperator*{\nat}{\mathbb{N}}
\newcommand{\im}{\text{\normalfont Im~}}
\newcommand{\re}{\text{\normalfont Re~}}
\newcommand{\supp}{\text{supp~}}
\newcommand{\pn}{\mathcal{P}(n)}
\newcommand{\ncpn}{\mathcal{NC}(n)}
\newcommand{\ipn}{\mathcal{I}(n)}
\newcommand{\mpn}{\mathcal{M}(n)}
\newcommand{\comm}{\mathbb{C}_{-}}
\begin{document}
\title{Analytic continuations of Fourier and Stieltjes transforms and generalized moments of probability measures}

\author{Takahiro Hasebe \\ Graduate School of Science,  Kyoto University,\\  Kyoto 606-8502, Japan\\ E-mail: hsb@kurims.kyoto-u.ac.jp}
\date{}

\maketitle

\begin{abstract}
We consider analytic continuations of Fourier transforms and Stieltjes transforms. This enables us to define what we call complex moments for some class of probability measures which do not have moments in the usual sense. There are two ways to generalize moments accordingly to Fourier and Stieltjes transforms; however these two turn out to coincide. As applications, we give short proofs of the convergence of probability measures to Cauchy distributions with respect to tensor, free, Boolean and monotone convolutions. \\

AMS Subject Classification: 60B10; 30D20; 46L53; 46L54 \\

Keywords: 
Fourier transform; Stieltjes transform; Cauchy distribution; non-commutative probability theory; 
Paley-Wiener theorem 
\end{abstract}
\section{Observation on Cauchy distributions} \label{Cauchy}
Let $\mu_{a,b}$ be the Cauchy distribution 
\[
\mu_{a,b}(dx) = \frac{1}{\pi} \cdot \frac{b}{(x-a)^2 + b^2} dx 
\]
with parameters $a \in \real$ and $b > 0$. This distribution has many important properties in probability theory, which we call tensor probability theory,  and also in noncommutative probability theory, especially in free, Boolean and monotone probability theories \cite{Mur3, S-W,V2}. For instance, $\mu_{a,b}$ is an infinitely divisible distribution with respect to tensor, free, Boolean and monotone convolutions. Moreover, $\mu_{a,b}$ is a strictly 1-stable distribution with respect to the four convolutions \cite{Be-Vo, K-G, Has1, S-W}. These facts are proved by calculating the Fourier transform and Stieltjes transform. The calculation for the tensor convolution differs from the other three: it is characterized by the multiplication of Fourier transforms while the other three convolutions are characterized by using Stieltjes transforms (see (\ref{ast})-(\ref{rhd})). In addition,  the Fourier transform strongly differs from the Stieltjes transform: the former is not (real) analytic in general but the latter is always analytic in the complex plane except for the real line. 
In spite of this difference, the four concepts of infinite divisibility have one-to-one correspondence to each other by using the so-called Bercovici-Pata bijections; the reader is referred to \cite{Be-Pa, Has0} and also \cite{Fra1}. A universal role of Cauchy distributions comes from the fact that 
they are fixed by the Bercovici-Pata bijections. 

We note that there are other nontrivial relations involving these four convolutions. For instance, some free $\frac{1}{2}$-stable distributions are infinitely divisible in the tensor sense \cite{S-P} (this is also the case in the monotone case, while its explicit calculation is not in the literature). Gaussian distributions are also such examples: they are infinitely divisible both in the tensor and free senses \cite{B}. These measures are however not fixed points of the Bercovici-Pata bijection.

Now we go on to the idea of this paper. 
The Fourier transform of a probability distribution $\mu$ on the real line is defined by 
\[
\mathcal{F}_\mu (z) = \int_{\real} e^{ixz}\mu(dx),~~~z\in \real, 
\]
and the Stieltjes transform is defined by 
\[ 
G_\mu(z) = \int_{\real} \frac{\mu(dx)}{z - x},~~~ z \in \comp \backslash \real. 
\] 
The reciprocal $F_\mu(z)=\frac{1}{G_\mu(z)}$ is also important in noncommutative probability theory. 
We use the usual notation for convolutions: $\ast$ for the tensor convolution (the usual convolution), $\boxplus$ for the free convolution, $\uplus$ for the Boolean convolution and $\rhd$ for the monotone convolution. These convolutions are characterized by \cite{Be-Vo, Mur3, V2, S-W}
\begin{align}
&\mathcal{F}_{\mu_1 \ast \mu_2}(z) = \mathcal{F}_{\mu_1}(z) \mathcal{F}_{\mu_2}(z), ~~z \in \real, \label{ast}\\
&F_{\mu_1 \boxplus \mu_2} ^{-1} (z) = F_{\mu_1}^{-1}(z) + F_{\mu_2}^{-1}(z) - z, ~~ z \in \Omega_1 \cap \Omega_2, \label{boxplus}\\ 
&F_{\mu_1 \uplus \mu_2}(z) = F_{\mu_1}(z) + F_{\mu_2}(z) - z, ~~ z \in \comm, \label{uplus}\\
&F_{\mu_1 \rhd\mu_2}(z) = F_{\mu_1} (F_{\mu_2}(z)),  ~~z \in \comm, \label{rhd}
\end{align}
for probability measures $\mu_1,~\mu_2$. $F_{\mu_k}^{-1}$ is defined in a domain $\Omega_k$ of $\comm=\{z \in \comp; \im z < 0 \}$ which will be explained in detail later. The upper half plane is usually used in free probability; however we choose $\comm$ to define a correspondence between 
a Fourier transform and a Stieltjes transform. The following observation on Cauchy distributions will be helpful to understand the reason.   

It is well known that $\mathcal{F}_{\mu_{a,b}}$ and $G_{\mu_{a,b}}$ are calculated as 
\begin{align}
&\mathcal{F}_{\mu_{a,b}} (z)= e^{iaz - b|z|},~~~z \in \real, \notag \\
&G_{\mu_{a,b}}(z) = \frac{1}{z - a +ib ~\text{sign} (\im z)},  ~~~ z \in \comp \backslash \real,  \notag
\end{align}
where $\text{sign}(x) = 1$ for $x > 0$ and $\text{sign}(x) = -1$ for $x < 0$. 
$\{ \mu_{at, bt} \}_{t \geq 0}$ is easily shown to become a convolution semigroup in tensor, free, Boolean and monotone senses, 
and therefore $\mu_{a,b}$ is infinitely divisible.  

Moments of a probability measure can be obtained as the coefficients of the Taylor expansion of $\mathcal{F}_\mu (z)$ or the Laurent expansion of 
$G_\mu (z)$. More precisely, let $m_n(\mu)$ be the $n$th moment of $\mu$ and then  
\begin{align}
&\mathcal{F}_{\mu} (z)= \sum_{n=0}^\infty \frac{m_n(\mu)}{n!}(iz)^n, \notag \\
&G_{\mu}(z) = \sum_{n=0}^\infty \frac{m_n(\mu)}{z^{n+1}}.   \notag
\end{align}
We neglected here the convergence of the series.

Let $\com+$ be the upper half plane. $G_{\mu_{a,b}}$ is analytic in $\comm$ and it has analytic continuation from $\comm$ to $\comp \backslash \{a + ib \}$. We denote this function by $\widetilde{G}_{\mu_{a,b}}$. It is important that $G_{\mu_{a,b}} \neq \widetilde{G}_{\mu_{a,b}}$ in $\com+$.  
We then have the convergent series 
\[
\begin{split}
\widetilde{G}_{\mu_{a,b}}(z) &= \frac{1}{z(1 - \frac{a+ib}{z})} \\
          &= \sum_{n=0} \frac{(a+ib)^n}{z^{n+1}} 
          \end{split}
\]
for $|z| > \sqrt{a^2 + b^2}$. At first sight a similar idea seems impossible for the Fourier transform because of the absolute value of $z$. 
However, if we restrict the domain to $z > 0$, then  $\mathcal{F}_{\mu_{a,b}} (z)= e^{iaz - bz} = e^{i(a+ib)z}, ~~z > 0$. We denote its analytic continuation to $\comp$ by $\widetilde{F}_{\mu_{a,b}}$: 
\[
\widetilde{F}_{\mu_{a,b}} = \sum_{n=0} \frac{(a+ib)^n}{n!}(iz)^n. 
\]
Since the same coefficient $(a+ib)^n$ appears in both Fourier and Stieltjes transforms, we decide to call $m_n(\mu_{a,b}):=(a+bi)^n$ the
 \textit{complex moments} of the Cauchy distribution. In the next section, we extend the concept of complex moments to more general probability measures. 

This paper is organized as follows.  The most important class of probability 
measures to be considered is $\mathcal{P}_1$ which roughly consists of probability measures with analytic densities at infinity.  
In Section \ref{Genmom}, we clarify similarity  between the Fourier transform and the Stieltjes transform regarding analytic properties, extending the definition of moments and four kinds of cumulants to $\mathcal{P}_1$. More precisely, we generalize moments in two different ways: one by means of analytic continuation of the Fourier transforms from $z > 0$ to $\comp$, and the other by analytic continuation of Stieltjes transforms from $\comm$ to $\comm \cup \{z \in \comp; |z|>R \}$ for some $R>0$. While these transforms have different analytic properties, such defined moments turn out to coincide. Then four kinds of cumulants can 
be defined and satisfy the moment-cumulant formulae. These moments and cumulants are complex numbers and 
we call them \textit{complex moments} and \textit{complex cumulants} respectively. 
Then the Cauchy distributions turn out to have the same complex cumulants in any sense of tensor, free, Boolean and monotone. This reveals a universal role of Cauchy distributions. In Section \ref{Lim} we prove that the sum of i.i.d. random  variables divided by the number of them converges to a Cauchy distribution, using complex moments and cumulants. This holds in any case of tensor, free, Boolean and monotone convolutions. In particular the monotone case is quite new since a complex analytic method is not known to examine limit behavior of monotone convolutions of probability measures with infinite moments. In Section \ref{Cha} we characterize $\mathcal{P}_1$ in terms of Fourier transforms and Stieltjes transforms; the characterization by Fourier transforms results in a kind of Paley-Wiener theorem.  

\section{Complex moments} \label{Genmom}
We introduce a class $\mathcal{P}_1$ of probability measures $\mu$ of the forms 
\begin{equation}\label{decomp}
\mu= \nu + \lambda, 
\end{equation} 
where $\nu$ and $\lambda$ satisfy the following properties. 
\begin{itemize}
\item[(A1)] $\nu$ is a positive measure with compact support.  
\item[(A2)] There exist $0 < r < R$ and a real analytic function $p(x) = \sum_{n=2}^\infty \frac{a_n}{x^n}$ which is absolutely convergent for 
 $|x|>r$ and is non-negative on $|x|\geq R$ such that $\lambda(dx) = p(x)\textbf{1}_{|x|\geq R}(x)dx$.      
\end{itemize}
We also define $\mathcal{P}_2$ consisting of $\mu = \nu + \lambda$, where 
$\nu$ is a positive measure with finite second moment and $\lambda$ satisfies the property (A2). 

\begin{exa}
The following measures satisfy the property (A2). 
\begin{itemize}
\item[(a)] Cauchy distributions $\lambda = \mu_{a,b}$ defined on $|x| \geq R$ with $R > |a| + b$. 
\item[(b)] $\lambda = \frac{1}{\sqrt{1 + x^4}}dx$  on $|x| \geq R$
 with $R > 1$.  
\item[(c)] $\lambda =  \frac{P(x)}{Q(x)}dx$ on $(-\infty,-R] \cup [R, \infty)$ for sufficiently large $R>0$, where  $P(x)$ and $Q(x)$ are polynomials satisfying that $\text{deg~}P(x)+2 \leq \text{deg~}Q(x)$ and $P(x)$, $Q(x) \geq 0$ for $|x| > R$.   
\item[(d)] $\lambda = \frac{1}{(1+x^{2n})^{\frac{p}{n}}}dx$ on $|x| \geq R$ with $R>1$, where $n \geq 1$ and $p \geq 1$ are natural numbers. \end{itemize}
(c) and (d) are generalizations of (a) and (b) respectively. In (a), we can define $\nu$ to be the restriction of the Cauchy distribution $\mu_{a,b}$ on 
$[-R, R]$, and then $\mu_{a,b} = \nu + \lambda$ belongs to $\mathcal{P}_1$.  

We check the property (A2) for the Cauchy distribution $\mu_{0, 1}$ for instance. The density function is given by   
\[\begin{split}
\frac{1}{\pi(1+x^2)} &= \frac{1}{\pi x^2(1 + \frac{1}{x^2})} \\
                     &= \frac{1}{\pi}\sum_{n = 0} ^\infty \frac{(-1)^n}{x^{2n+2}}.   
\end{split}
\]
This series converges for $|x| > 1$. 
\end{exa}

The free, Boolean and monotone convolutions are all characterized by using Stieltjes transforms. Therefore, we consider the analytic continuation of Stieltjes transforms. 

\begin{prop} \label{prop1}
Let $\lambda$ be a positive measure on $(-\infty,-R] \cup [R, \infty)$ which satisfies (A2). Then 
$G_\lambda (z)$ defined in $\comm$ has analytic continuation to $\comp_{-} \cup \{z \in \comp; |z| > R \}$. We denote it by $\widetilde{G}_\lambda$.  $\widetilde{G}_\lambda(z)$ has convergent series of the form 
\[
\widetilde{G}_\lambda(z) = \sum_{n=0}^\infty \frac{b_n}{z^{n+1}}, ~~~|z| > R.    
\]
\end{prop}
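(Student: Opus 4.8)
The plan is to reduce the Stieltjes transform, which is an integral over the two \emph{unbounded} rays $[R,\infty)$ and $(-\infty,-R]$, to a single contour integral over the \emph{bounded} upper semicircle $C_R^{+}=\{Re^{i\theta}:0\le\theta\le\pi\}$ (oriented by increasing $\theta$). The payoff is twofold: such a contour integral is manifestly analytic in $z$ for every $z$ lying off $C_R^{+}$, which produces the continuation, and the contour being compact makes the geometric expansion in powers of $1/z$ immediate.

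First I would record that hypothesis (A2) gives a single-valued holomorphic extension of $p$ to the whole complex exterior $\{w\in\comp:|w|>r\}$. Indeed, since $\sum_{n\ge 2}|a_n|\rho^{-n}<\infty$ for each $\rho>r$ (absolute convergence at the real point $x=\rho$), the series $\sum_{n\ge 2}a_n w^{-n}$ converges absolutely and locally uniformly on $|w|>r$ and defines there a holomorphic function, still denoted $p$, with $p(w)=O(|w|^{-2})$ as $|w|\to\infty$.

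The crux is the contour deformation. Fix $z\in\comm$ and set $\Omega_{+}=\{w:|w|>R,\ \im w>0\}$. On $\overline{\Omega_{+}}$ one has $|w|\ge R>r$, so $p$ is holomorphic there; moreover $z\notin\overline{\Omega_{+}}$ because $\im z<0\le\im w$, so $w\mapsto p(w)/(z-w)$ is holomorphic on a neighbourhood of $\overline{\Omega_{+}}$. Applying Cauchy's theorem to this integrand over $\partial(\Omega_{+}\cap\{|w|<\rho\})$ and letting $\rho\to\infty$ — the outer arc contributes $O(\rho^{-2})\to 0$ thanks to $p(w)/(z-w)=O(|w|^{-3})$ — the two ray integrals collapse onto the semicircle, yielding
\[
G_\lambda(z)=\int_{R}^{\infty}\frac{p(x)}{z-x}\,dx+\int_{-\infty}^{-R}\frac{p(x)}{z-x}\,dx=\int_{C_R^{+}}\frac{p(w)}{z-w}\,dw,\qquad z\in\comm .
\]

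Finally I would read off both conclusions. The right-hand side is holomorphic in $z$ on all of $\comp\setminus C_R^{+}$, a set containing $\comm\cup\{|z|>R\}$, and it agrees with $G_\lambda$ on $\comm$; this is the desired continuation $\widetilde{G}_\lambda$, single-valued by construction. For $|z|>R$ every $w\in C_R^{+}$ satisfies $|w|/|z|=R/|z|<1$, so $\frac{1}{z-w}=\sum_{n=0}^{\infty}\frac{w^{n}}{z^{n+1}}$ converges uniformly in $w\in C_R^{+}$; interchanging sum and integral gives
\[
\widetilde{G}_\lambda(z)=\sum_{n=0}^{\infty}\frac{b_n}{z^{n+1}},\qquad b_n:=\int_{C_R^{+}}w^{n}p(w)\,dw,\qquad |z|>R .
\]
The main obstacle is the deformation step: one must confirm that (A2) genuinely supplies a \emph{single-valued} holomorphic $p$ on the exterior together with the $O(|w|^{-2})$ decay needed to discard the arc at infinity, and that $z$ stays off $\overline{\Omega_{+}}$. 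The non-negativity of $p$ on $|x|\ge R$ plays no role in this analytic argument and can be ignored; only the convergent Laurent representation is used.
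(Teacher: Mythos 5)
Your proof is correct, and its engine is the same as the paper's: replace the integral over the two unbounded rays by an integral over a compact semicircle that separates $z$ from the singularities of the density, then expand $\frac{1}{z-w}$ geometrically. The difference lies in where the deformation is carried out. The paper first inverts, substituting $u=1/x$ so that the rays become the bounded segment $[-R^{-1},R^{-1}]$ and the integrand becomes $v\,\frac{u\,p(1/u)}{u-v}\,\frac{1}{u^{2}}$ with $v=1/z$ in the upper half-plane; Cauchy's theorem inside the disk $\{|u|<r^{-1}\}$, where $u^{-2}p(1/u)$ is analytic and the only pole $u=v$ lies above the real axis, moves the segment onto the lower semicircle of radius $R^{-1}$, and no limit over expanding contours is needed. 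You work directly in the original variable, so you must justify discarding the outer arc as $\rho\to\infty$; you do this correctly via the bound $p(w)=O(|w|^{-2})$, which you correctly extract from (A2) (absolute convergence at a real point of modulus $\rho>r$ controls the series on the whole region $|w|\ge\rho$). The two computations are conjugate to one another under the inversion $w\mapsto 1/w$, so this is a difference of execution rather than of idea; what your version buys is that your coefficient formula $b_n=\int_{C_R^{+}}w^{n}p(w)\,dw$ is exactly the integral representation that the paper only records later, in Theorem \ref{thm1}, so your route delivers that statement simultaneously, whereas the paper's proof produces $b_n=\int_{C_R}p(1/w)w^{-n-2}\,dw$ in the inverted variable and converts it afterwards. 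Your closing observation that the non-negativity of $p$ plays no role in the continuation argument is also consistent with the paper, whose proof never invokes it.
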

\begin{proof}
By assumption $\frac{1}{w^2}p(\frac{1}{w})$ is analytic in $\{z; |z| < r^{-1} \}$. We define $v = \frac{1}{z}$ for $\im z < 0$. We notice that $\im v > 0$. 
Then it holds that 
\begin{equation}\notag
\begin{split}
G_\lambda(z) &= \int_{|x|>R} \frac{p(x)}{z-x}dx \\ 
           & = \int_{-R^{-1}}^{R^{-1}} \frac{p(\frac{1}{u})}{z-\frac{1}{u}}\frac{1}{u^2}du \\ 
           &= v\int_{-R^{-1}}^{R^{-1}} \frac{u p(\frac{1}{u})}{u-v}\frac{1}{u^2}du \\  
&= v\int_{C_R} \frac{w p(\frac{1}{w})}{w-v}\frac{1}{w^2}dw,    
\end{split} 
\end{equation}
where $C_R$ is the semicircle $C_R =\{R^{-1}e^{i \theta}; - \pi \leq \theta \leq 0 \}$ with the counterclockwise direction. The last expression is also valid for $|v| < R^{-1}$ as well as for $v \in \com+$. The Taylor expansion of 
it becomes 
\[
\sum_{n=0}^\infty v^{n+1}\int_{C_R}\frac{p(\frac{1}{w})}{w^{n+2}}dw. 
\]
\end{proof}

The tensor convolution (the usual convolution) is characterized by the multiplication of Fourier transforms.  
Therefore, we in turn consider analytic continuation of Fourier transforms.

\begin{prop}\label{prop2}
Let $\lambda$ be a positive measure on $(-\infty,-R] \cup [R, \infty)$ which satisfies (A2). Then 
$\mathcal{F}_\lambda$ defined for $z > 0$ is the restriction of an entire function, which we denote by $\widetilde{\mathcal{F}}_\lambda$. 
Therefore the Taylor series 
\[
\widetilde{\mathcal{F}}_\lambda(z) = \sum_{n=0}^\infty \frac{c_n}{n!}(iz)^n    
\]  
is convergent for all $z \in \comp$. 
\end{prop}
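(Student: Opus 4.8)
The plan is to mimic the geometry behind Proposition \ref{prop1}, but using a rotation of the contour rather than the inversion $x=1/u$, since under that substitution the factor $e^{ixz}$ acquires an essential singularity at the origin. First I observe that for real $z$ the integral $\mathcal{F}_\lambda(z)=\int_{|x|\ge R}e^{ixz}p(x)\,dx$ converges absolutely, because (A2) forces $p(x)=O(x^{-2})$ as $|x|\to\infty$. The naive route of expanding $e^{ixz}=\sum_n (iz)^n x^n/n!$ and integrating term by term fails immediately: the moments $\int_{|x|\ge R}x^n p(x)\,dx$ diverge for $n\ge1$. So the analytic continuation cannot be read off the integral as written and must be produced by deforming the contour.

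Fix $z>0$. On each half-line I would push the contour toward the positive imaginary axis, where $e^{ixz}$ decays. Concretely, apply Cauchy's theorem to $e^{ixz}p(x)$, which is analytic on $\{|x|>r\}\supset\{|x|\ge R\}$, over the closed quarter-circle regions in the first and second quadrants bounded by the real half-lines, the quarter arcs of radius $R$, the segments of the imaginary axis, and large arcs of radius $\rho$. The large arcs are controlled by $|e^{ixz}|=e^{-z\rho\sin\theta}\le1$ on $0\le\theta\le\pi$ together with $|p|=O(\rho^{-2})$, so their contribution is $O(\rho^{-1})\to0$; note that the decay of $p$ alone suffices and the delicate form of Jordan's lemma is not needed. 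Letting $\rho\to\infty$ yields, for the two half-lines,
\[
\int_R^\infty e^{ixz}p\,dx=i\!\int_R^\infty e^{-sz}p(is)\,ds+\int_{\text{arc }R\to iR},\qquad
\int_{-\infty}^{-R} e^{ixz}p\,dx=-i\!\int_R^\infty e^{-sz}p(is)\,ds+\int_{\text{arc }iR\to -R}.
\]

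The crucial point is now visible: the two vertical integrals along the positive imaginary axis carry opposite signs and cancel when the half-lines are added, even though each of them converges only for $\re z>0$. What survives is the sum of the two quarter arcs, i.e.\ the integral over the upper semicircle $\Gamma_R=\{Re^{i\theta}:0\le\theta\le\pi\}$ traversed from $R$ to $-R$:
\[
\mathcal{F}_\lambda(z)=\int_{\Gamma_R} e^{ixz}p(x)\,dx,\qquad z>0.
\]
The right-hand side is the integral of an entire integrand over a fixed compact contour on which $p$ is bounded, so it defines an entire function $\widetilde{\mathcal{F}}_\lambda$ of $z$; this is the desired continuation. Expanding $e^{ixz}=\sum_n (iz)^n x^n/n!$, which converges uniformly in $x$ on $\Gamma_R$, and integrating term by term gives the stated series with $c_n=\int_{\Gamma_R}x^n p(x)\,dx$. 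Since $|x|=R$ and $p$ is bounded by some $M$ on $\Gamma_R$, one has $|c_n|\le \pi M R^{n+1}$, so $\sum_n (c_n/n!)(iz)^n$ is dominated by $\pi M R\, e^{R|z|}$ and converges for every $z\in\comp$, confirming that it is entire.

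The main obstacle is conceptual rather than computational: recognizing that term-by-term integration is illegitimate and that the correct continuation is the compact-contour integral over $\Gamma_R$. The one technical point needing care is the justification of the deformation—verifying analyticity of $p$ on the relevant quadrant regions (guaranteed by $R>r$) and the vanishing of the large arcs—after which the entirety and the convergence of the Taylor series are immediate. A substitution $x=1/w$ in $c_n=\int_{\Gamma_R}x^n p(x)\,dx$ would further identify these coefficients, up to orientation, with the $b_n$ of Proposition \ref{prop1}, which is presumably the coincidence exploited later.
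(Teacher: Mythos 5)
Your proof is correct, and it lands on exactly the representation the paper's argument produces---$\mathcal{F}_\lambda(z)=\int_{\gamma_R}e^{ixz}p(x)\,dx$ over the counterclockwise upper semicircle of radius $R$, the formula recorded in Theorem \ref{thm1}---but by a different-looking route. The paper first performs the inversion $u=1/x$ (exactly as in the proof of Proposition \ref{prop1}), so the two half-lines become the segment $[-R^{-1},R^{-1}]$ through the origin, and then deforms that segment onto the lower semicircle $C_R$ in the $u$-plane, using $\re(iz/u)\le 0$ for $\im u\le 0$, $z>0$, to kill the small semicircle around the essential singularity of $e^{iz/u}$ at $u=0$. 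Your rotation of the half-lines onto the imaginary axis, with the two vertical integrals cancelling and the large arcs vanishing because $p=O(\rho^{-2})$ and $|e^{ixz}|\le 1$ there, is precisely the image of the paper's deformation under inversion: your limit $\rho\to\infty$ of the large arc corresponds to the paper's limit $\varepsilon\searrow 0$ of the small semicircle, so the two arguments have the same mathematical content. What your version buys: you never confront the essential singularity at all, and you obtain the coefficient formula $c_n=\int_{\gamma_R}x^np(x)\,dx$ with the bound $|c_n|\le \pi M R^{n+1}$ directly, which the paper only extracts afterwards in Theorem \ref{thm1} and Proposition \ref{prop13}. What the paper's version buys: by running the same inversion as in Proposition \ref{prop1}, both analytic continuations end on literally the same contour $C_R$ with the same integrand structure, so the equality $b_n=c_n$ of Theorem \ref{thm1} is immediate by inspection, whereas from your formula one still needs the substitution $x=1/w$ (which you note at the end) to identify the two families of coefficients.
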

\begin{proof}
We have for $z > 0$
\begin{equation}\label{eq12}
\begin{split}
\mathcal{F}_\lambda (z) &= \int_{|x|>R} e^{ixz}p(x)dx \\
   &= \int_{-R^{-1}}^{R^{-1}}e^{i\frac{z}{u}}
p\Big(\frac{1}{u}\Big)\frac{1}{u^2}du.  
\end{split} 
\end{equation}
Since $\re \!\big( i\frac{z}{u}\big) \leq 0$ for $\im u \leq 0$, $u \neq 0$, we have 
\[
\lim_{\varepsilon \searrow 0} \int_{C_{\varepsilon^{-1}}}e^{i\frac{z}{u}}
p\Big(\frac{1}{u}\Big)\frac{1}{u^2}du = 0. 
\]
Then the path in the integral (\ref{eq12}) can be changed as 
\[
\begin{split}
\int_{-R^{-1}}^{R^{-1}}e^{i\frac{z}{u}}
p\Big(\frac{1}{u}\Big)\frac{1}{u^2}du &= \lim_{\varepsilon \searrow 0} \Big( \int_{-R^{-1}}^{-\varepsilon} + \int^{R^{-1}}_{\varepsilon}\Big) \Big(e^{i\frac{z}{u}}
p\Big(\frac{1}{u}\Big)\frac{1}{u^2}du\Big)\\ 
&= \lim_{\varepsilon \searrow 0} \int_{\Gamma_{R, \varepsilon}}e^{i\frac{z}{u}}
p\Big(\frac{1}{u}\Big)\frac{1}{u^2}du \\
  &= \lim_{\varepsilon \searrow 0} \int_{C_{R}} e^{i\frac{z}{u}}
p\Big(\frac{1}{u}\Big)\frac{1}{u^2}du\\ 
&= \int_{C_{R}}e^{i\frac{z}{u}}
p\Big(\frac{1}{u}\Big)\frac{1}{u^2}du, 
\end{split}
\]
where $\Gamma_{R, \varepsilon}$ is a curve consisting of two line segments 
$[-R^{-1}, -\varepsilon]$ and $[\varepsilon, R^{-1}]$ and a semicircle $C_{\varepsilon^{-1}}$.  
Therefore we have 
\begin{equation}
\mathcal{F}_\lambda (z) = \int_{C_{R}}e^{i\frac{z}{u}}
p\Big(\frac{1}{u}\Big)\frac{1}{u^2}du, 
\end{equation}
for $z > 0$. This expression continues $\mathcal{F}_\lambda$ from $(0, \infty)$ to $\comp$. 
\end{proof}

It is natural to define what we call complex moments of $\lambda$ by the sequence $b_n$ for free, Boolean and monotone probability theories and by $c_n$ for tensor probability theory. There is no a priori evidence that these two ways define the same quantities.  
However, the two analytic continuations give the same complex moments as is understood from the proofs: 

\begin{thm}\label{thm1}
Let $b_n$ and $c_n$ be complex numbers defined in the propositions \ref{prop1} and \ref{prop2}. Then $b_n = c_n$ for all $n \geq 0$ and they are expressed as  
\[
b_n = c_n = \int_{\gamma_R} z^n p(z)dz.  
\]
where $\gamma_R =\{Re^{i \theta}; 0 \leq \theta \leq \pi \}$ equipped with counterclockwise direction. 
We denote $b_n$ by $m_n(\lambda)$.    
\end{thm}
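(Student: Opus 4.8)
The plan is to extract explicit contour-integral formulas for the coefficients $b_n$ and $c_n$ directly from the two preceding proofs, observe that these are literally the \emph{same} integral over $C_R$, and then evaluate that common integral by the inversion $z = 1/w$ to land on the stated formula over $\gamma_R$. The entire argument is thus a matter of reading off coefficients and tracking one change of variables; no new estimates are needed beyond the uniform convergence already implicit in Propositions \ref{prop1} and \ref{prop2}.

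First I would isolate $b_n$. The proof of Proposition \ref{prop1} produces the representation $\widetilde{G}_\lambda(z) = v\int_{C_R} \frac{w\, p(1/w)}{w-v}\,\frac{1}{w^2}\,dw$ with $v = 1/z$. Expanding the kernel as $\frac{1}{w-v} = \sum_{n\geq 0} v^n w^{-n-1}$, a series that converges uniformly on the compact arc $C_R$ (where $|w| = R^{-1}$) precisely when $|v| < R^{-1}$, i.e. $|z| > R$, and integrating term by term gives $b_n = \int_{C_R} \frac{p(1/w)}{w^{n+2}}\,dw$. Next I would do the same for $c_n$: starting from $\widetilde{\mathcal{F}}_\lambda(z) = \int_{C_R} e^{iz/u}\, p(1/u)\,\frac{1}{u^2}\,du$, I expand $e^{iz/u} = \sum_{n\geq 0} \frac{(iz)^n}{n!}\,u^{-n}$, which again converges uniformly on $C_R$ because $u$ stays on the circle $|u| = R^{-1}$, and integrate term by term to read off $c_n = \int_{C_R} \frac{p(1/u)}{u^{n+2}}\,du$. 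Since $b_n$ and $c_n$ are the identical contour integral, the equality $b_n = c_n$ follows immediately, with no further computation.

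It then remains to evaluate this common integral. I would apply the substitution $z = 1/w$, so $w = 1/z$ and $dw = -z^{-2}\,dz$, under which $\frac{p(1/w)}{w^{n+2}}\,dw = -z^n p(z)\,dz$. The inversion $w \mapsto 1/w$ carries the lower semicircle $C_R$ (radius $R^{-1}$, traversed from $-R^{-1}$ to $R^{-1}$) onto the upper semicircle of radius $R$, but traversed from $-R$ to $R$, i.e. \emph{clockwise}; the sign coming from the Jacobian $-z^{-2}$ exactly reverses this orientation, so the result is the upper semicircle oriented counterclockwise, which is $\gamma_R$. This yields $b_n = c_n = \int_{\gamma_R} z^n p(z)\,dz$.

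The only genuine obstacle is the orientation bookkeeping in this last step: one must check that the reversal of the sense in which the arc is traversed under inversion, combined with the sign of the Jacobian, produces precisely the counterclockwise $\gamma_R$ of the statement and not its reverse. Everything else — the two term-by-term integrations — is routine, justified in both cases by uniform convergence of the relevant geometric and exponential series on the compact contour $C_R$.
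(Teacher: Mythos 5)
Your proposal is correct and takes essentially the same route as the paper: both arguments read off the coefficient formulas $b_n = c_n = \int_{C_R} \frac{p(1/w)}{w^{n+2}}\,dw$ directly from the proofs of Propositions \ref{prop1} and \ref{prop2}, and then convert this common integral into $\int_{\gamma_R} z^n p(z)\,dz$ via the inversion $z = 1/w$. Your write-up simply makes explicit the term-by-term integrations and the orientation bookkeeping that the paper dismisses as ``easily proved,'' and your handling of both (uniform convergence on the compact arc, Jacobian sign cancelling the orientation reversal) is accurate.
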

\begin{proof}
We have proved that $b_n = c_n =\int_{C_R}\frac{p(\frac{1}{w})}{w^{n+2}}dw$. We can easily prove that this integral is equal 
to $\int_{\gamma_R} z^n p(z)dz$.   
\end{proof}
Therefore we can generalize moments both in terms of the Fourier transform and the Stieltjes transform. 

\begin{defi} 
(Complex moments). We define the $n$th \textit{complex moment} of $\mu \in \mathcal{P}_1$ by $m_n(\mu) = m_n(\nu) + m_n(\lambda)$, where $\nu$ and $\lambda$ are given by a decomposition in (\ref{decomp}). Similarly we define the first and second moments of $\mu \in \mathcal{P}_2$ by $m_j(\mu) = m_j(\nu) + m_j(\lambda)$ for $j = 1$, $2$. 
\end{defi}
\begin{rem}
(1) The decomposition in (\ref{decomp}) and the choice of the convergence radius 
$R$ of $\lambda$ are not unique. It is however not difficult to prove that the definition of complex moments does not depend on these ambiguities. \\
(2) We notice that the domain $(0, \infty)$ of the Fourier transform and the domain $\comm$ of the Stieltjes transform play analogous roles 
in the analytic continuations. The information of $G_\mu$ on $\com+$  is recovered from the relation $G_\mu(z) = \overline{G_\mu(\bar{z})}$; 
this is similar to the relation $\mathcal{F}_\mu(z) = \overline{\mathcal{F}_\mu(-z)}$. 
\end{rem}
\begin{exa}
(1) $\mu(dx) = \frac{2}{\pi(1+4x^4)}dx$ on $\real$. Then $\mathcal{F}_\mu(z) = \sqrt{2}\re \! (e^{\frac{i-1}{2}z}e^{-\frac{\pi i}{4}}) = e^{-\frac{z}{2}}(\cos(\frac{z}{2}) + \sin(\frac{z}{2}))$ for $z > 0$. The $n$th complex moment is $m_n(\mu)=i^n 2^{\frac{1-n}{2}}\sin(\frac{1-n}{4}\pi)$. In particular, $m_1(\mu)=0$, $m_2(\mu) = \frac{1}{2}$, $m_3(\mu)=\frac{i}{2}$. 
\\
(2) $\mu_0(dx) = \frac{\sqrt{2}x^2}{\pi(1+x^4)}dx$ on $\real$. Then $\mathcal{F}_{\mu_0}(z) = \sqrt{2}\re \! (e^{\frac{\pi i}{4}}\exp(ize^{\frac{\pi i }{4}})) = \sqrt{2} \sum_{n=0}^\infty \frac{\cos(\frac{3n +1}{4}\pi)}{n!}z^n$ for $z > 0$.  The $n$th complex moment is given by $m_n(\mu_0) = \sqrt{2}i^n \cos(\frac{n-1}{4}\pi)$. In particular, $m_1(\mu_0) = \sqrt{2}i$, $m_2(\mu_0) = -1$. \\ 
(3) More generally, $\mu_a = \frac{\sqrt{2}(x-a)^2}{\pi(1+(x-a)^4)}dx$ on $\real$, $a \in \real$. This is the convolution of $\mu_0$ and $\delta_a$. Since the binomial expansion holds for the moments of the tensor convolution,  the $n$th complex moment is $m_n(\mu_a) = \sum_{k=0}^n \frac{n!}{k!(n-k)!}\sqrt{2}i^k \cos(\frac{k-1}{4}\pi) a^{n-k}$. 
In particular, $m_1(\mu_a) = a + \sqrt{2}i$, $m_2(\mu_a) = -1+a^2 + 2\sqrt{2}ai$.  

In general, if $\mu \in \mathcal{P}_1$ is symmetric, there exist $r_n \in \real$ such that $m_n(\mu) = r_n i^n$. 
This is because the Fourier transform takes real values on $\real$ and therefore the coefficient of $z^n$ ($z > 0$) in the Taylor expansion is  a real number for any $n$.  
\end{exa}

In addition to moments, we can extend cumulants. 
We need the following fact whose proof is identical to that in \cite{H-S}.

\begin{lem} For any $\mu \in \mathcal{P}_1$, 
$m_n(\mu^{\ast N})$, $m_n(\mu^{\boxplus N})$, $m_n(\mu^{\uplus N})$ and $m_n(\mu^{\rhd N})$ are all polynomials of $N$ and 
$m_k(\mu)$ ($1 \leq k \leq n$). 
\end{lem}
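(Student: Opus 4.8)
The plan is to prove all four statements by a single induction on $n$, reducing the $N$-dependence to a telescoping sum. The mechanism common to the four convolutions is a \emph{one-step formula}: for any two measures whose complex moments are defined,
\[
m_n(\mu \star \rho) = m_n(\mu) + m_n(\rho) + R_n^{\star}\big(m_1(\mu),\dots,m_{n-1}(\mu); m_1(\rho),\dots,m_{n-1}(\rho)\big),
\]
where $\star \in \{\ast, \boxplus, \uplus, \rhd\}$ and $R_n^{\star}$ is a universal polynomial in the listed lower-order moments. In words, the top-order moment is additive and all couplings involve only strictly lower moments. I stress that, because $m_n$ is by Theorem \ref{thm1} the coefficient of the analytically continued transform (the Taylor coefficient of $\widetilde{\mathcal{F}}$ in the tensor case and the Laurent coefficient of $\widetilde{G}$, equivalently of $F$, in the other three), these one-step formulas are purely formal identities among coefficient sequences; they coincide with the identities holding for ordinary moments, so the derivation is identical to the one in \cite{H-S}.

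I would first establish the four one-step formulas from (\ref{ast})--(\ref{rhd}). For the tensor case, multiplying $\widetilde{\mathcal{F}}_{\mu}\widetilde{\mathcal{F}}_{\rho}$ yields the binomial identity $m_n(\mu\ast\rho)=\sum_{k=0}^{n}\binom{n}{k}m_k(\mu)m_{n-k}(\rho)$, whose extreme terms $k=0,n$ give $m_n(\rho)+m_n(\mu)$ and whose remaining terms form $R_n^{\ast}$. For the Boolean case, (\ref{uplus}) says that the Laurent coefficients of $F$ are literally additive, and since the coefficient of $z^{-k}$ in $F_\mu=1/\widetilde{G}_\mu$ equals $-m_{k+1}(\mu)$ plus a polynomial in $m_1(\mu),\dots,m_k(\mu)$ (because $\widetilde{G}_\mu(z)=\sum_{n\ge0}m_n(\mu)z^{-n-1}$), additivity of these coefficients translates into the required additive-plus-lower-order structure for the moments. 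The free case follows in the same way from (\ref{boxplus}), once one notes that functional inversion $F\mapsto F^{-1}$ acts triangularly on Laurent coefficients. Finally, for the monotone case one expands the composition $F_\mu\circ F_\rho$ in (\ref{rhd}): since $F_\rho(z)=z+O(1)$, substituting into $F_\mu(w)=w+\sum_{k\ge0}\phi_k(\mu)w^{-k}$ shifts each $\phi_k$ only by contributions of strictly more negative order, so the coefficient of $z^{-k}$ in $F_\mu\circ F_\rho$ is $\phi_k(\mu)+\phi_k(\rho)$ plus terms built from lower-indexed coefficients of both; reading this off in moments produces $R_n^{\rhd}$.

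With the one-step formulas in hand, I would run the induction on $n$. The base case $n=1$ is $m_1(\mu^{\star N})=N\,m_1(\mu)$ in all four senses, a polynomial in $N$. Assume that for every $k<n$ and every $M$ the moment $m_k(\mu^{\star M})$ is a polynomial in $M$ whose coefficients are polynomials in $m_1(\mu),\dots,m_k(\mu)$. Applying the one-step formula with $\rho=\mu^{\star(M-1)}$ (using associativity of $\star$, so that $\mu^{\star M}=\mu\star\mu^{\star(M-1)}$) gives
\[
m_n(\mu^{\star M})-m_n(\mu^{\star(M-1)})=m_n(\mu)+R_n^{\star}\big(m_1(\mu),\dots,m_{n-1}(\mu); m_1(\mu^{\star(M-1)}),\dots,m_{n-1}(\mu^{\star(M-1)})\big).
\]
By the inductive hypothesis the right-hand side is a polynomial in $M$ with coefficients polynomial in the moments of $\mu$. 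Summing this increment telescopically from $M=2$ to $N$, and using that the partial sums of a polynomial sequence are again a polynomial in the upper limit, I conclude that $m_n(\mu^{\star N})$ is a polynomial in $N$ with coefficients polynomial in $m_1(\mu),\dots,m_n(\mu)$, which closes the induction and settles all four convolutions simultaneously.

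The main obstacle is the monotone case. Unlike the three commutative convolutions, where one may alternatively invoke additivity of the classical, free or Boolean cumulants, the monotone one-step formula and its triangular structure must be extracted directly from the composition identity (\ref{rhd}); this is precisely the nontrivial computation carried out in \cite{H-S}, and it is what makes the monotone treatment genuinely new. A secondary but necessary point is that the complex moments $m_n(\mu^{\star N})$ be defined in the first place, i.e.\ that the relevant convolution powers still fall under the analytic continuations of Proposition \ref{prop1} and Proposition \ref{prop2}; the formal coincidence of the two definitions of complex moments in Theorem \ref{thm1} is what guarantees that the coefficient identities used above are consistent across the Fourier and Stieltjes pictures.
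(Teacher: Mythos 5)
Your proof is correct and is essentially the paper's own approach: the paper gives no argument beyond deferring to \cite{H-S}, and the argument there is exactly what you reconstruct — one-step moment formulas for a single convolution (top-order additivity plus a universal polynomial in lower-order moments, valid for complex moments because they are coefficients of the analytically continued transforms), followed by induction on $n$ and a telescoping sum in $N$. Your closing caveat about well-definedness of $m_n(\mu^{\star N})$ is also the point the paper addresses via Corollary \ref{cor1}.
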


\begin{defi}
(Complex cumulants). We define the $n$th \textit{complex tensor cumulant} $K_n^T(\mu)$, \textit{complex free cumulant} $K_n^F(\mu)$, \textit{complex Boolean cumulant} $K_n^B(\mu)$ and \textit{complex monotone cumulant} $K_n^M(\mu)$ of $\mu \in \mathcal{P}_1$ respectively by the coefficients of $N$ in $m_n(\mu^{\ast N})$, $m_n(\mu^{\boxplus N})$, $m_n(\mu^{\uplus N})$ and $m_n(\mu^{\rhd N})$. If there is no confusion, we call them cumulants for simplicity. 
\end{defi}
We find that the four kinds of cumulants of the Cauchy distribution $\mu_{a,b}$ are given by 
\[
K_n^T(\mu_{a,b}) = K_n^F(\mu_{a,b}) = K_n^B(\mu_{a,b})=K_n^M(\mu_{a,b})=\begin{cases} &a+bi,~~n=1, \\ & 0,~~n \geq 2. \end{cases} 
\]
This helps us to understand a universal role of Cauchy distributions in noncommutative probability theory.

From now on we show basic properties of complex moments and cumulants. 

\begin{prop}\label{prop11}
$\im \!(m_1(\mu)) \geq 0$ for all $\mu \in \mathcal{P}_2$. 
\end{prop}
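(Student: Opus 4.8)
The plan is to exploit the decomposition $\mu = \nu + \lambda$ and the additivity $m_1(\mu) = m_1(\nu) + m_1(\lambda)$, treating the two pieces separately. Since $\nu$ is a positive measure with finite second moment, its first complex moment is just the ordinary first moment $m_1(\nu) = \int_\real x\,\nu(dx)$, which is a real number; hence $\im m_1(\nu) = 0$ and the entire imaginary part of $m_1(\mu)$ comes from $\lambda$. It therefore suffices to prove $\im m_1(\lambda) \geq 0$.

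For the $\lambda$-part I would start from the formula supplied by Theorem \ref{thm1}, namely $m_1(\lambda) = \int_{\gamma_R} z\,p(z)\,dz$ with $\gamma_R$ the upper semicircle $\{Re^{i\theta}; 0 \leq \theta \leq \pi\}$ oriented counterclockwise. Substituting the series $p(z) = \sum_{n=2}^\infty a_n z^{-n}$ gives
\[
z\,p(z) = \sum_{n=2}^\infty a_n z^{1-n}.
\]
On the circle $|z| = R$ with $R > r$ this series converges absolutely and uniformly, because $\sum_n |a_n| R^{-n} < \infty$ by (A2), so term-by-term integration is justified and
\[
m_1(\lambda) = \sum_{n=2}^\infty a_n \int_{\gamma_R} z^{1-n}\,dz.
\]
Next I compute each contour integral. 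Parametrizing $z = Re^{i\theta}$, the $n=2$ term is $\int_{\gamma_R} z^{-1}\,dz = \int_0^\pi i\,d\theta = i\pi$. For every $n \geq 3$ the integrand has the single-valued antiderivative $z^{2-n}/(2-n)$, and evaluating it at the endpoints $z = R$ and $z = -R = Re^{i\pi}$ gives a real value, since $(-R)^{2-n} = (-1)^{n} R^{2-n}$ is real. Because all the $a_n$ are themselves real (forced by $p$ being a real function on $|x| > r$), only the logarithmic $n=2$ term contributes to the imaginary part, and I obtain $\im m_1(\lambda) = \pi a_2$.

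It remains to show $a_2 \geq 0$, and this is the conceptual heart of the argument, where the non-negativity clause of (A2) finally enters: since $p(x) \geq 0$ for $|x| \geq R$ and $a_2 = \lim_{x \to +\infty} x^2 p(x)$ (all later terms of $x^2 p(x)$ decaying to $0$), the limit of a non-negative quantity is non-negative, so $a_2 \geq 0$. Combining the two pieces yields $\im m_1(\mu) = \pi a_2 \geq 0$. The only real obstacle here is bookkeeping rather than depth: one must verify carefully that every $n \geq 3$ term integrates to a real number and that the coefficients $a_n$ are genuinely real, so that the single $n=2$ term is the unique source of an imaginary part. Once that is isolated, the sign of $\im m_1(\mu)$ is tied directly to the leading $a_2/x^2$ asymptotics of the density, which is visibly non-negative.
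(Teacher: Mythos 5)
Your proof is correct and follows essentially the same route as the paper: reduce to the $\lambda$-part via the decomposition, apply the contour-integral formula $m_1(\lambda)=\int_{\gamma_R} z\,p(z)\,dz$ from Theorem \ref{thm1}, integrate the series term by term so that only the $n=2$ term contributes an imaginary part $\pi a_2$, and conclude from $p\geq 0$ that $a_2\geq 0$. The only differences are cosmetic (you evaluate the $n\geq 3$ integrals by antiderivatives rather than by parametrization, and you spell out the limit argument $a_2=\lim_{x\to+\infty}x^2p(x)\geq 0$ that the paper leaves implicit).
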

\begin{proof}
Let $\mu = \nu + \lambda$ be a decomposition in (\ref{decomp}). 
We have only to consider $\lambda$. By Theorem \ref{thm1} $m_1(\lambda)$ is calculated as 
\[
\begin{split}
m_1(\lambda) &= \int_{\gamma_R} z p(z)dz \\
              &= \sum_{n = 2}^\infty a_n \int_{\gamma_R} z^{1-n}dz \\
              &= i \sum_{n = 2}^\infty a_n \int_{0}^\pi R^{2-n}e^{i(2-n)\theta} d\theta \\ 
  &= i\pi a_2 - \sum_{n = 3}^\infty a_n  R^{2-n}\frac{1 - (-1)^n}{2-n}.
\end{split}
\]
Since $p(x)$ is a positive function, $a_2 \geq 0$. Therefore, 
$\im \!(m_1(\mu)) = \pi a_2 \geq 0$.  
\end{proof}
The above fact will be used in Section \ref{Lim} to formulate convergence of probability measures to Cauchy distributions.  

The following property follows from the integral representation of complex moments in Theorem \ref{thm1}. This result means that the set $\mathcal{P}_1$ is analogous to the set of probability measures with compact supports. 
\begin{prop}\label{prop13}
Let $\mu = \nu + \lambda \in \mathcal{P}_1$ be a decomposition as in (\ref{decomp}), where 
$\nu$ is supported on $[-R_1, R_1]$ and $\lambda$ is supported on $(-\infty,-R_2] \cup [R_2, \infty)$.   
Then there exists $C>0$ depending on $\nu,\lambda$ such that $|m_n(\mu)| \leq C R^n$ for all $n \geq 1$, 
where $R:=\max \{R_1, R_2\}$. 
\end{prop}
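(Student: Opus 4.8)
The plan is to bound the two pieces of the complex moment $m_n(\mu) = m_n(\nu) + m_n(\lambda)$ separately, exploiting the integral representation from Theorem \ref{thm1}. Since $\nu$ is a positive measure supported on $[-R_1,R_1]$, it has genuine moments and $m_n(\nu) = \int_{[-R_1,R_1]} x^n \nu(dx)$, so immediately $|m_n(\nu)| \leq R_1^n \, \nu([-R_1,R_1]) = \nu(\real)\, R_1^n \leq \nu(\real)\, R^n$. This handles the compactly supported part with constant $\nu(\real)$.

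For the part $\lambda$, I would use the formula $m_n(\lambda) = \int_{\gamma_R} z^n p(z)\, dz$, where $\gamma_R = \{Re^{i\theta} : 0 \leq \theta \leq \pi\}$ and $p(z) = \sum_{k=2}^\infty a_k z^{-k}$. A first naive attempt is to bound $|z^n p(z)|$ on $\gamma_R$ and multiply by the arc length $\pi R$; but one must be careful that $R$ here is the larger of $R_1, R_2$, whereas $p$ is analytic (and the series converges) only for $|z| > r$. Since $R \geq R_2 > r$, the contour $\gamma_R$ lies in the region of convergence and $|p(z)| \leq \sum_{k=2}^\infty |a_k| R^{-k} =: M < \infty$ is a finite constant depending only on $\lambda$ and $R$. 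Parametrizing $z = Re^{i\theta}$ gives $|z^n| = R^n$ and $|dz| = R\, d\theta$, so
\[
|m_n(\lambda)| \;\leq\; \int_0^\pi R^n \, |p(Re^{i\theta})| \, R \, d\theta \;\leq\; \pi M R^{n+1}.
\]
Thus $|m_n(\lambda)| \leq (\pi M R)\, R^n$, which is of the desired form with a constant depending on $\lambda$ and $R$.

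Combining the two estimates yields $|m_n(\mu)| \leq (\nu(\real) + \pi M R)\, R^n$, and setting $C := \nu(\real) + \pi M R$ completes the proof. The only mild subtlety — and the one step that deserves genuine care rather than routine bounding — is confirming that the contour $\gamma_R$ of radius $R = \max\{R_1, R_2\}$ still lies inside the annulus $\{|z| > r\}$ where the Laurent-type series for $p$ converges absolutely, so that $M$ is finite; this is immediate from $R \geq R_2 > r$. Everything else is a direct modulus estimate on a finite contour, and the polynomial growth in $R^n$ is exactly what the single factor $z^n$ in the integrand produces.
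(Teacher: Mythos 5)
Your decomposition and the estimate for $\nu$ are exactly the paper's, and the modulus-estimate strategy for $\lambda$ is also the paper's; but the step you yourself single out as the delicate one is resolved incorrectly, because the identity you base it on is false in general. You claim $m_n(\lambda) = \int_{\gamma_R} z^n p(z)\,dz$ with $R = \max\{R_1,R_2\}$. Theorem \ref{thm1}, applied to $\lambda$ (which satisfies (A2) with outer radius $R_2$), gives this formula with the contour $\gamma_{R_2}$, not $\gamma_R$. The issue is not whether $\gamma_R$ lies in the region $\{|z|>r\}$ of absolute convergence of the series for $p$ --- it does --- but that $\gamma_R$ is an \emph{open} contour (a semicircle, not a closed curve), so analyticity of $z^n p(z)$ in $\{|z|>r\}$ does not make the integral independent of the radius. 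Deforming $\gamma_{R_2}$ to $\gamma_R$ through the upper half-annulus picks up two real-axis segments:
\[
\int_{\gamma_{R_2}} z^n p(z)\,dz \;=\; \int_{\gamma_R} z^n p(z)\,dz \;+\; \int_{R_2 \leq |x| \leq R} x^n p(x)\,dx ,
\]
and the last term is nonzero in general. A concrete counterexample to your identity: take $p(z)=z^{-4}$, $\lambda = x^{-4}\mathbf{1}_{|x|\geq 1}\,dx$ (so $R_2=1$), $n=2$, $R=2$; then $\int_{\gamma_1} z^{-2}\,dz = 2$, which is the correct value (it agrees with the classical second moment $\int_{|x|\geq 1}x^{-2}\,dx=2$), whereas $\int_{\gamma_2} z^{-2}\,dz = 1$.

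The repair is immediate, and it is precisely what the paper does: there is no need to move the contour at all. Keep it at radius $R_2$ and estimate
$|m_n(\lambda)| \leq \pi \bigl(\sup_{|z|=R_2}|p(z)|\bigr) R_2^{\,n+1} = \bigl(\pi M R_2\bigr) R_2^{\,n} \leq \bigl(\pi M R_2\bigr) R^n$, since $R_2 \leq R$. (Alternatively, if you insist on working at radius $R$, you must either carry the correction term above --- which is itself harmless, being bounded by $\lambda(\real)\,R^n$ --- or first re-decompose $\mu = \nu' + \lambda'$ with $\nu' = \nu + p\,\mathbf{1}_{R_2 \leq |x| < R}\,dx$ and $\lambda' = p\,\mathbf{1}_{|x|\geq R}\,dx$, invoking the paper's Remark that complex moments do not depend on the choice of decomposition.) With that one correction, your argument coincides with the paper's proof.
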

\begin{proof} Let $p$ be the density function of $\lambda$.  We have 
\[
|m_n(\nu)| \leq \int_{-R_1} ^{R_1} |x|^n \nu(dx) \leq \nu(\real) R_1 ^n
\] 
and 
\[
|m_n(\lambda)| \leq \int_{\gamma_{R_2}} |z|^n|p(z)||dz| \leq CR_2 ^{n+1} 
\]
for a constant $C>0$. This completes the proof.  
\end{proof}
\begin{rem}
In Section \ref{Cha} we introduce a canonical constant $R_\mu$ of $\mu \in \mathcal{P}_1$. Then we can take the constant $R$ arbitrarily near to $R_\mu$: $R_\mu = \inf \{R \geq 0; \sup_{n \geq 1} \frac{|m_n(\mu)|}{R^n} < \infty \}$. 
\end{rem}

Let $\pn$, $\ncpn$, $\ipn$ and $\mpn$ be respectively the sets of partitions, non-crossing partitions, interval partitions and monotone partitions of $\{1, \cdots, n \}$. Only the monotone partitions are equipped with order structure.  The reader is referred to \cite{H-S, N-S1} for details. 
For a partition $\pi =\{V_1, \cdots, V_{k}\}\in \pn$ and a sequence $\{ r_n \}_{n \geq 1} \subset \comp$, we define $r_\pi:= r_{|V_1|} \cdots r_{|V_k|}$.  

Since we defined complex moments via analytic continuations, the following properties hold. 
\begin{prop}
The moment-cumulant formulae hold:   
\begin{align}
&m_n (\mu) = \sum_{\pi \in \pn} K_\pi^T (\mu), \label{cmc} \\
&m_n(\mu) = \sum_{\pi \in \ncpn} K_\pi^F(\mu), \label{fmc} \\
&m_n (\mu) = \sum_{\pi \in \ipn} K_\pi^B(\mu), \label{bmc} \\
&m_n (\mu)= \sum_{(\pi, \lambda) \in \mpn} \frac{K_\pi^M(\mu)}{|\pi|!}. \label{mmc}   
\end{align}
\end{prop}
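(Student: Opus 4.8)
The plan is to reduce all four identities to the corresponding \emph{formal} moment--cumulant formulae, which are already available for measures with all moments and are cited from \cite{H-S}, and then to argue that the complex moments of $\mu \in \mathcal{P}_1$ obey exactly the same convolution laws as ordinary moments. The key observation is that the defining relations (\ref{ast})--(\ref{rhd}), which a priori hold only on the original domains ($\real$ for $\mathcal{F}$ and $\comm$ for $F, G$), propagate to the continued transforms $\widetilde{\mathcal{F}}$ and $\widetilde{G}$ of Propositions \ref{prop2} and \ref{prop1} by uniqueness of analytic continuation. Since by Theorem \ref{thm1} the complex moments $m_n(\mu)$ are precisely the Taylor/Laurent coefficients of these continued transforms, every algebraic relation among ordinary moments induced by a convolution is inherited verbatim by the complex moments. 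Throughout I assume, as in the Lemma preceding the Definition of complex cumulants, that the relevant convolution powers again lie in $\mathcal{P}_1$, so that their complex moments are defined and are polynomials in $N$.

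First I would treat the tensor case. On $(0,\infty)$ the identity (\ref{ast}) gives $\mathcal{F}_{\mu^{\ast N}} = (\mathcal{F}_\mu)^N$; both sides extend to entire functions by Proposition \ref{prop2}, so $\widetilde{\mathcal{F}}_{\mu^{\ast N}} = (\widetilde{\mathcal{F}}_\mu)^N$ on all of $\comp$. Writing $\log \widetilde{\mathcal{F}}_\mu(z) = \sum_{n} \frac{\kappa_n}{n!}(iz)^n$ near $z=0$ (legitimate since $\widetilde{\mathcal{F}}_\mu(0)=1$) defines an additive sequence $\kappa_n$, and the exp--log relation is exactly the classical expansion $m_n(\mu^{\ast N}) = \sum_{\pi \in \pn} N^{|\pi|}\prod_{V \in \pi}\kappa_{|V|}$. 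The coefficient of $N$ is $\kappa_n$, which therefore coincides with $K_n^T(\mu)$ as defined, and setting $N=1$ yields (\ref{cmc}).

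The Boolean and free cases are analogous, now on the Stieltjes side via the continued reciprocal $\widetilde{F}_\mu = 1/\widetilde{G}_\mu$, which is analytic for $|z|>R$. From (\ref{uplus}) the energy function $z - \widetilde{F}_\mu(z)$ is additive under $\uplus$, and from (\ref{boxplus}) the Voiculescu transform $\widetilde{F}_\mu^{-1}(z) - z$ is additive under $\boxplus$, both by analytic continuation; the corresponding Laurent coefficients are the additive Boolean and free cumulants. The standard expansions of these transforms over interval, respectively non-crossing, partitions then give $m_n(\mu^{\uplus N}) = \sum_{\pi \in \ipn} N^{|\pi|}\prod_{V}\kappa^B_{|V|}$ and $m_n(\mu^{\boxplus N}) = \sum_{\pi \in \ncpn} N^{|\pi|}\prod_{V}\kappa^F_{|V|}$; extracting the linear $N$-coefficient identifies the cumulants as $K_n^B$ and $K_n^F$, and setting $N=1$ yields (\ref{bmc}) and (\ref{fmc}).

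The hard part is the monotone formula (\ref{mmc}), since (\ref{rhd}) is a \emph{composition} law rather than an additive one, so the cumulants are not additive and the polynomial $m_n(\mu^{\rhd N})$ must be analyzed through the time-ordered flow obtained by iterating $\widetilde{F}_\mu$. Here I would invoke the machinery of \cite{H-S}: the composition semigroup $\{\mu^{\rhd t}\}$ is governed by an autonomous differential equation whose generating vector field has the monotone cumulants as coefficients, and Taylor expansion of its solution in $t$ organizes the $|\pi|$-fold nested action of this field over monotone partitions, producing the weight $N^{|\pi|}/|\pi|!$, the factorial reflecting the ordering of blocks. This gives $m_n(\mu^{\rhd N}) = \sum_{(\pi,\lambda)\in\mpn} \frac{N^{|\pi|}}{|\pi|!}\prod_{V}\kappa^M_{|V|}$, a purely formal identity in $m_1(\mu),\dots,m_n(\mu)$ that transfers from the compactly supported setting once (\ref{rhd}) is known for $\widetilde{F}_\mu$; taking the coefficient of $N$ identifies $K_n^M$ and setting $N=1$ gives (\ref{mmc}). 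The one point demanding care in all four cases is the legitimacy of applying the functional equations on the overlaps of the domains of the continued transforms, which is where uniqueness of analytic continuation and Propositions \ref{prop1} and \ref{prop2} do the essential work.
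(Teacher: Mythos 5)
Your proposal is correct and takes essentially the same route as the paper: the paper's proof is exactly the observation that, since complex moments are the Taylor/Laurent coefficients of the analytically continued transforms, the functional equations (\ref{ast})--(\ref{rhd}) and hence the formal moment--cumulant identities carry over verbatim from the compactly supported case, with the details delegated to \cite{H-S, Shi, Spe2, S-W} and \cite{H-S2}. Your write-up simply makes explicit what those citations contain (the exp--log expansion in the tensor case, the self-energy and Voiculescu transforms in the Boolean and free cases, and the monotone flow of \cite{H-S}).
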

The proof is the same as the usual case where $\mu$ has a compact support; the reader is referred to \cite{H-S, Shi, Spe2,S-W} and also 
\cite{H-S2} for a unified treatment of (\ref{cmc}), (\ref{fmc}) and (\ref{bmc}).

Let $D_c$ be the dilation operator defined by 
\[
\int_{\real} f(x) (D_c \mu)(dx) = \int_{\real}f(c x) \mu(dx)
\]
for all bounded continuous function $f$.

\begin{prop}\label{prop3} Let $\mu, ~\mu_1, ~\mu_2 \in \mathcal{P}_1$. 
The complex cumulants are additive and homogeneous: \\
(1) $K_n^T(\mu_1 \ast \mu_2) = K_n^T(\mu_1) + K_n^T(\mu_2)$,  ~~$K_n^T (D_c \mu) = c^n K_n^T(\mu)$, \\
(2) $K_n^F(\mu_1 \boxplus \mu_2) = K_n^F(\mu_1) + K_n^F(\mu_2)$,  ~~$K_n^F (D_c \mu) = c^n K_n^F(\mu)$, \\ 
(3) $K_n^B(\mu_1 \uplus \mu_2) = K_n^B(\mu_1) + K_n^B(\mu_2)$,  ~~$K_n^B (D_c \mu) = c^n K_n^B(\mu)$, \\
(4) $K_n^M(\mu ^{\rhd N}) = NK_n^M(\mu)$ for any $N \in \nat$,  ~~$K_n^M (D_c \mu) = c^n K_n^M(\mu)$.     
\end{prop}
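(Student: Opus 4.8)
The plan is to deduce everything from two facts: the behaviour of complex moments under dilation, and the persistence of the transform identities (\ref{ast})--(\ref{rhd}) for the analytically continued objects. I would treat homogeneity first, uniformly for all four theories. The integral representation $m_n(\lambda)=\int_{\gamma_R}z^np(z)\,dz$ of Theorem \ref{thm1} gives, after the change of variables $w=z/c$ (and the obvious scaling on the compactly supported part $\nu$), the identity $m_n(D_c\mu)=c^nm_n(\mu)$ for every $\mu\in\mathcal{P}_1$. Since $D_c$ is a homomorphism for each of the four convolutions (for $\rhd$ this is the elementary relation $F_{D_c\mu}(z)=cF_\mu(z/c)$, whence $F_{D_c\mu_1\rhd D_c\mu_2}(z)=cF_{\mu_1\rhd\mu_2}(z/c)=F_{D_c(\mu_1\rhd\mu_2)}(z)$), we get $D_c(\mu^{\star N})=(D_c\mu)^{\star N}$ for $\star\in\{\ast,\boxplus,\uplus,\rhd\}$. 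Applying $m_n(D_c\,\cdot)=c^nm_n(\cdot)$ to $\mu^{\star N}$ and reading off the coefficient of $N$ (using the Lemma that $m_n(\mu^{\star N})$ is a polynomial in $N$) yields $K_n^\bullet(D_c\mu)=c^nK_n^\bullet(\mu)$ for all four cumulants at once.

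For the additive statements (1)--(3) I would transport the linearizing transforms to the complex-moment setting. For the tensor case, $\widetilde{\mathcal{F}}_{\mu_1\ast\mu_2}$ and $\widetilde{\mathcal{F}}_{\mu_1}\widetilde{\mathcal{F}}_{\mu_2}$ are entire and, by (\ref{ast}), agree on $(0,\infty)$, hence coincide; writing $\widetilde{\mathcal{F}}_{\mu^{\ast N}}=(\widetilde{\mathcal{F}}_\mu)^N=\exp(N\log\widetilde{\mathcal{F}}_\mu)$ shows that the coefficient of $N$ in $m_n(\mu^{\ast N})$ is the $n$th Taylor coefficient of $\log\widetilde{\mathcal{F}}_\mu$, so $K_n^T$ is additive because $\log$ turns the product into a sum. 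For the free and Boolean cases I would work with $\widetilde{F}_\mu=1/\widetilde{G}_\mu$, which near infinity has the expansion $\widetilde{F}_\mu(z)=z-m_1(\mu)+O(1/z)$ determined by the $m_n(\mu)$; the relations (\ref{boxplus}) and (\ref{uplus}), holding on the open set $\comm$, extend by analyticity and give additivity of the $R$-transform $\widetilde{F}_\mu^{-1}(z)-z$ and of the self-energy $z-\widetilde{F}_\mu(z)$ respectively. To match this with the definition of cumulants, I would check that the coefficient of $N$ in $m_n(\mu^{\star N})$ equals the corresponding transform coefficient: by the moment-cumulant formula (\ref{fmc}) (resp. (\ref{bmc})) together with the linear scaling of these transform coefficients under $\mu\mapsto\mu^{\boxplus N}$ (resp. $\mu^{\uplus N}$), only the one-block partition contributes to the coefficient of $N$, so the $N$-coefficient cumulant coincides with the transform cumulant and inherits its additivity.

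The monotone part (4) is genuinely different because (\ref{rhd}) is composition, not addition. Here the key is the semigroup property: from $F_{\mu^{\rhd N}}=F_\mu^{\circ N}$ one gets $\mu^{\rhd N}\rhd\mu^{\rhd M}=\mu^{\rhd(N+M)}$ and hence $(\mu^{\rhd N})^{\rhd M}=\mu^{\rhd NM}$. Writing $P_n(N):=m_n(\mu^{\rhd N})$, which is a polynomial in $N$ with $P_n(0)=0$ for $n\ge1$ and with $K_n^M(\mu)$ equal to its coefficient of $N$, we obtain $m_n((\mu^{\rhd N})^{\rhd M})=P_n(NM)$; reading off the coefficient of $M$ gives $N\cdot(\text{coeff.\ of }N\text{ in }P_n)=NK_n^M(\mu)$, i.e. $K_n^M(\mu^{\rhd N})=NK_n^M(\mu)$.

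The main obstacle I anticipate is not any single computation but the bookkeeping needed to make the coefficient extraction legitimate: one must know that $\mathcal{P}_1$ is closed under the relevant convolutions (so that the complex moments of $\mu_1\star\mu_2$ and of $\mu^{\star N}$ are defined and the transforms have the asserted continuations), and one must justify that the identities (\ref{boxplus})--(\ref{rhd}), stated a priori on $\comm$ or on the inversion domains $\Omega_k$, do carry over verbatim to the analytically continued transforms near infinity. Once these two points are secured, each additivity and homogeneity statement reduces to a comparison of convergent power-series coefficients that is formally identical to the classical compactly supported case.
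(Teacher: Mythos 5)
Your proposal is correct and follows essentially the same route as the paper, which gives no written-out argument but simply states that ``this proof is also the same as the usual one'' and defers to the cited literature: the standard proofs there are exactly what you reconstruct (dilation homogeneity of moments plus the homomorphism property of $D_c$, linearization via $\log\widetilde{\mathcal{F}}_\mu$, the Voiculescu transform and the self-energy for (1)--(3), and the semigroup identity $(\mu^{\rhd N})^{\rhd M}=\mu^{\rhd NM}$ with polynomial coefficient extraction for (4), as in \cite{H-S}). The two prerequisites you flag --- closure of $\mathcal{P}_1$ under the four convolutions and the persistence of (\ref{ast})--(\ref{rhd}) under analytic continuation --- are precisely what the paper handles elsewhere (Corollary \ref{cor1} and the remark following the proposition), so your bookkeeping concern is resolved by the paper itself rather than being a gap.
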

\begin{rem}
As proved in Corollary \ref{cor1}, $\mathcal{P}_1$ is closed under the four kinds of convolutions, and hence the above cumulants such as $K_n^T(\mu_1 \ast \mu_2)$ are well-defined.  
\end{rem}
This proof is also the same as the usual one. See \cite{H-S, H-S2, Shi, Spe2,S-W} for details.

\section{Convergence of probability measures to Cauchy distribution}\label{Lim}
Now we prove that the distribution of $\frac{X_1 + \cdots + X_N}{N}$ converges weakly to Cauchy distributions for i.i.d. random variables $X_i$ with the distribution in $\mathcal{P}_1$ or $\mathcal{P}_2$.  We consider only probability measures since difficulty arises to formulate the results in terms of unbounded operators.  
It is known that $\mathcal{P}_2$ belong to the domain of attractor of Cauchy distributions in tensor, free and Boolean probability theories \cite{Be-Pa, K-G}. 
There are however two merits in the approach of this paper: the proof is simple; the two parameters $a$ and $b$ appearing in the limit distributions can be explicitly calculated in terms of complex moments.  

Before proving them, we introduce a tool in free probability.  
A domain of $F^{-1}_\mu$ for a probability measure $\mu$ is defined as follows.  
It is known that there exists a domain $\Omega$ such that $F_\mu$ has the right inverse $F_\mu^{-1}$ on $\Omega$. It is possible to choose $\Omega$ of the form $\Omega = \bigcup_{\alpha > 0} \Gamma_{\alpha, \beta_\alpha}$ for some $\beta_{\alpha} > 0$, where $\Gamma_{\alpha, \beta}=\{z \in \comm;  |\re z| < \alpha |\im z|,~\im z < - \beta \}$.  

\begin{thm}
Let $\mu$ be a probability measure in $\mathcal{P}_2$,  $a = \re \!\big( m_1(\mu)\big)$ and $b = \im \!\big( m_1 (\mu)\big)$.  
The probability measures $\mu_{N}^T \equiv (D_{\frac{1}{N}}\mu)^{\ast N}$, $\mu_{N}^F \equiv (D_{\frac{1}{N}}\mu)^{\boxplus N}$ 
and $\mu_{N}^B \equiv (D_{\frac{1}{N}}\mu)^{\uplus N}$ all converge weakly to the Cauchy distribution $\mu_{a,b}$.  
If $b = 0$, we understand that $\mu_{a,0}= \delta_a$. 
\end{thm}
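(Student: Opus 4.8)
The plan is to reduce each of the three weak convergences to the leading boundary behaviour of a single transform and then to invoke a continuity theorem: Lévy's theorem for the tensor case and the continuity theorem for Stieltjes transforms for the free and Boolean cases. The common input, which I would isolate first, is that the complex first moment $m_1(\mu)=a+bi$ governs both the behaviour of $\mathcal{F}_\mu$ at $z=0^+$ and that of $G_\mu$ at $z=\infty$. Precisely, for $\mu\in\mathcal{P}_2$ I would prove the twin estimates
\[
\mathcal{F}_\mu(z)=1+i\,m_1(\mu)\,z+o(z)\quad(z\to 0^+),
\]
\[
G_\mu(z)=\frac{1}{z}+\frac{m_1(\mu)}{z^2}+o\!\left(\frac{1}{z^2}\right)\quad(z\to\infty\text{ nontangentially in }\comm),
\]
the latter being equivalent to $F_\mu(z)=z-m_1(\mu)+o(1)$. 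By Proposition \ref{prop11} we have $b=\im m_1(\mu)\ge 0$, so $\mu_{a,b}$ is indeed a Cauchy distribution (or $\delta_a$ when $b=0$).

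Writing $\mu=\nu+\lambda$ as in (\ref{decomp}), the two estimates for the tail $\lambda$ are immediate from the preceding propositions. Proposition \ref{prop2} shows that $\mathcal{F}_\lambda$ extends to an entire function whose Taylor expansion begins $\lambda(\real)+i\,m_1(\lambda)\,z+O(z^2)$; since this extension equals $\mathcal{F}_\lambda$ on $(0,\infty)$, the Fourier estimate for $\lambda$ follows. Proposition \ref{prop1} gives $\widetilde{G}_\lambda(z)=\lambda(\real)z^{-1}+m_1(\lambda)z^{-2}+O(z^{-3})$ for $|z|>R$, which is the Stieltjes estimate for $\lambda$. For the part $\nu$, which has finite second moment, I would use the classical first-order expansions of $\mathcal{F}_\nu$ and $G_\nu$; the finiteness of the moments of $\nu$ is exactly what forces the remainders to be $o(z)$ and $o(z^{-2})$, the latter uniformly in the nontangential regime by dominating $|x|/|1-x/z|$ by $|x|/c$ on a cone $|\im z|\ge c|z|$. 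Adding the two pieces and using $\nu(\real)+\lambda(\real)=1$ and $m_1(\mu)=m_1(\nu)+m_1(\lambda)$ yields the displayed estimates.

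With the key estimate available, each case is short. For the tensor case, $\mathcal{F}_{\mu_N^T}(z)=\mathcal{F}_\mu(z/N)^N$, so for fixed $z>0$ the Fourier estimate gives $N\log\mathcal{F}_\mu(z/N)\to i\,m_1(\mu)\,z$, whence $\mathcal{F}_{\mu_N^T}(z)\to e^{iaz-bz}$; the symmetry $\mathcal{F}_{\mu_N^T}(-z)=\overline{\mathcal{F}_{\mu_N^T}(z)}$ extends this to $e^{iaz-b|z|}=\mathcal{F}_{\mu_{a,b}}(z)$ for all real $z$, and Lévy's theorem concludes. For the Boolean case, set $E_\mu(z):=z-F_\mu(z)$; then (\ref{uplus}) and the scaling $E_{D_c\mu}(z)=c\,E_\mu(z/c)$ give $E_{\mu_N^B}(z)=E_\mu(Nz)\to m_1(\mu)$, so $F_{\mu_N^B}(z)\to z-(a+bi)=F_{\mu_{a,b}}(z)$ and hence $G_{\mu_N^B}\to G_{\mu_{a,b}}$ on $\comm$. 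For the free case, with the Voiculescu transform $\phi_\mu:=F_\mu^{-1}-\mathrm{id}$, (\ref{boxplus}) and the scaling $\phi_{D_c\mu}(z)=c\,\phi_\mu(z/c)$ give $\phi_{\mu_N^F}(z)=\phi_\mu(Nz)$, which by the inverted form of the Stieltjes estimate tends to $m_1(\mu)$; thus $F_{\mu_N^F}(z)\to z-(a+bi)$ and $G_{\mu_N^F}\to G_{\mu_{a,b}}$. When $b=0$ these limits read $e^{iaz}$ and $(z-a)^{-1}$, i.e. $\mu_{a,0}=\delta_a$.

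I expect the free case to be the main obstacle. Converting $\phi_\mu(w)\to m_1(\mu)$ into genuine convergence of Stieltjes transforms requires controlling the right inverse $F_\mu^{-1}$ on the cones $\Gamma_{\alpha,\beta}$: one must check that $w\to\infty$ nontangentially forces $F_\mu^{-1}(w)\to\infty$ nontangentially as well, so that the asymptotics of $F_\mu$ may be inverted, and then verify the hypotheses of the Bercovici--Voiculescu continuity theorem (uniform convergence of $\phi_{\mu_N^F}$ on compact subsets of a suitable $\Gamma_{\alpha,\beta}$, which upgrades the pointwise limit via a normal-families argument, together with the control needed to exclude escaping mass). Once these domain issues are settled the passage $G_{\mu_N^F}\to G_{\mu_{a,b}}\Rightarrow \mu_N^F\to\mu_{a,b}$ is the standard Stieltjes continuity theorem, and the tensor and Boolean cases are comparatively routine.
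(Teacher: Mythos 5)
Your proposal is correct and takes essentially the same route as the paper's own proof: first-order expansions of $\mathcal{F}_\mu$ near $0^+$ and of $F_\mu$ at $\infty$ obtained from the decomposition $\mu = \nu + \lambda$, then L\'{e}vy continuity for $\ast$, the identity $F_{\mu_N^B}(z) = F_\mu(Nz) - (N-1)z$ for $\uplus$, and the relation $\phi_{\mu_N^F}(z) = \phi_\mu(Nz)$ for $\boxplus$ combined with a continuity theorem. The inversion and domain issues you flag in the free case are precisely what the paper settles by invoking Corollary 5.5 of Bercovici--Voiculescu (giving $F_\mu^{-1}(z) = z + m_1(\mu) + o(1)$ nontangentially via the functional equation $F_\mu(F_\mu^{-1}(z)) = z$) and their Proposition 5.7 for the passage to weak convergence.
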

\begin{proof}
As usual we take a decomposition $\mu = \nu + \lambda$ in (\ref{decomp}). 
It is well known that $\int_{\real} e^{ixz}\nu(dx) = \nu(\real) + im_1(\nu)z + o(z)$ as $z \to 0$. Therefore we have  
\begin{equation}\notag
\begin{split}
\mathcal{F}_{\mu_N^T}(z) &= \Big(\mathcal{F}_{\mu}\Big(\frac{z}{N}\Big)\Big) ^{N} \\
              &= \Big(1 + \frac{m_1(\mu)}{N}(iz) + o(N^{-1}) \Big) ^N \\
             & \to e^{im_1(\mu)z}~~\text{as~}N \to \infty \\
              & = e^{i az - bz}
              \end{split} 
\end{equation}
for $z \geq 0$. We notice that $m_1(\mu)$ has a non-negative imaginary part from Proposition \ref{prop11}.  
Therefore the limit is the Fourier transform of the Cauchy distribution 
$\mu_{a,b}$ with $a = \re \!(m_1(\mu))$ and $b= \im \!(m_1(\mu))$. Since $\mathcal{F}_{\rho}(z) = \overline{\mathcal{F}_\rho(-z)}$ for $z < 0$, $\mathcal{F}_{\mu_N^T}(z) $ converges to $\mathcal{F}_{\mu_{a,b}}(z)$ for all $z \in \real$. 
Therefore $\mu_N^T$ converges to $\mu_{a,b}$ weakly. 

We use the characterization (\ref{boxplus}) to prove the weak convergence of $\mu_N^F$.  
It is easy to prove that 
\[
F_\mu(z)= z -m_1(\mu) +f(z),~~ f(z) \to 0 \text{~as~} z \to \infty  \text{~non tangentially satisfying~} z \in \comm,   
\] 
by using the Nevanlinna representation $F_\nu(z) = z - m_1(\nu) + \int_{\real}\frac{1}{x-z}\tau(dx)$ for $\nu$, where $\tau$ is a positive finite measure \cite{Akh}. 
The right inverse can be written as $F_\mu^{-1}(z) = z +m_1(\mu) + g(z)$. We show that $g(z) \to 0$ in the non tangential limit $z \to \infty$. 
The equality $F_\mu (F_\mu^{-1}(z)) = z$ for $z \in \Omega$ becomes 
\[
0 = g(z) + f(z + m_1(\mu) + g(z)). 
\]
Since $\frac{F_\mu^{-1}(z)}{z} \to 1$ in the non tangential limit $z \to \infty$ (Corollary 5.5 in \cite{Be-Vo}), 
we get $\frac{g(z)}{z} \to 0$. Then $ f(z + m_1(\mu) + g(z)) = f(z (1 + \frac{m_1(\mu)}{z} + \frac{g(z)}{z}) ) \to 0$. 
Therefore, $g(z) \to 0$ in the non tangential limit.  
Finally we obtain 
\[
\begin{split}
F^{-1}_{\mu_N^F}(z) &= F^{-1}(Nz) - (N-1)z \\
                    &=  z + m_1 (\mu) + g(Nz) \\
                &\to z + m_1(\mu)   
                    \end{split}
                    \] 
as $N \to \infty$ for all $z \in \comm$.     
Thus $\mu_N^F$ converges to $\mu_{a, b}$ from Proposition 5.7 in \cite{Be-Vo}. 

The Boolean case is easier. 
(\ref{uplus}) implies that 
\[
F_{\mu_N ^B} (z) = F_{\mu}(Nz) - (N-1)z \to F_{\mu_{a,b}}(z) = z - m_1(\mu) ~~\text{as}~ N \to \infty. 
\]
The weak convergence follows from Theorem 2.5 in \cite{Maa}. 
\end{proof}

We can prove a similar result for the monotone convolution. This is quite new since there are not complex analytic methods to analyze iteration of monotone convolutions of probability measures without the usual moments. 
Now we prove a limit theorem using the complex moments. In this case we restrict the class of probability measures to $\mathcal{P}_1$. 
We need a Lemma. 
\begin{lem}\label{lem1}
If $|m_n(\mu)| \leq R^n$ for all $n \geq 0$, then $|m_n(\mu^{\rhd N})| \leq (NR)^n$ for all $n,~N \geq 0$. 
\end{lem}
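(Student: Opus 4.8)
The plan is to pass from $G_\mu$ to the function $\gamma_\mu(w):=G_\mu(1/w)$, which near $w=0$ has the convergent expansion $\gamma_\mu(w)=w+\sum_{n\geq1}m_n(\mu)w^{n+1}$; the linear coefficient is exactly $w$ because $m_0(\mu)=\mu(\real)=1$, and convergence for small $|w|$ comes from Proposition \ref{prop1} together with the compactly supported part $\nu$ of the decomposition. The point of this substitution is that it linearizes the monotone convolution into composition. Indeed, from the defining relation (\ref{rhd}) we have $F_{\mu\rhd\nu}=F_\mu\circ F_\nu$, and since $G=1/F$ and $F_\nu(z)=1/G_\nu(z)$, a direct computation gives $G_{\mu\rhd\nu}(z)=G_\mu(F_\nu(z))=\gamma_\mu(G_\nu(z))$, that is,
\[
\gamma_{\mu\rhd\nu}=\gamma_\mu\circ\gamma_\nu.
\]
Iterating, $\gamma_{\mu^{\rhd N}}=\gamma_\mu^{\circ N}$, the $N$-fold self-composition. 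This holds as an identity of analytic functions in a neighbourhood of $w=0$, hence as an identity of power series; here I use that $\mu^{\rhd N}\in\mathcal{P}_1$ (Corollary \ref{cor1}), so that $\gamma_{\mu^{\rhd N}}$ is itself a convergent series $\sum_n m_n(\mu^{\rhd N})w^{n+1}$ whose coefficients may legitimately be read off from $\gamma_\mu^{\circ N}$.

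Next I would reduce the bound to a majorant comparison. The hypothesis $|m_n(\mu)|\leq R^n$ says precisely that $\gamma_\mu$ is dominated coefficientwise in modulus by $\gamma_*(w):=w+\sum_{n\geq1}R^nw^{n+1}=\frac{w}{1-Rw}$, which is exactly $\gamma_{\delta_R}$. Since $\gamma_*$ is a M\"obius map one computes directly $\gamma_*^{\circ N}(w)=\frac{w}{1-NRw}=\sum_{n\geq0}(NR)^nw^{n+1}$, whose coefficients are the target values $(NR)^n$ (matching the extremal case $\delta_R^{\rhd N}=\delta_{NR}$). It therefore suffices to show that coefficientwise domination is preserved under composition: if $\phi_1,\phi_2,\psi_1,\psi_2$ are power series of the form $w+O(w^2)$, each $\psi_i$ has non-negative coefficients, and $|[w^k]\phi_i|\leq[w^k]\psi_i$ for all $k$ and $i=1,2$, then $|[w^k](\phi_1\circ\phi_2)|\leq[w^k](\psi_1\circ\psi_2)$ for all $k$. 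This is the standard fact that each coefficient of a composition is a polynomial with non-negative integer coefficients in the coefficients of the two constituent series; taking absolute values and applying the triangle inequality termwise, together with $[w^k]\psi_2^{\,j}\geq0$, gives the claim. An induction on $N$, using $\gamma_\mu^{\circ N}=\gamma_\mu\circ\gamma_\mu^{\circ(N-1)}$ and $\gamma_*^{\circ N}=\gamma_*\circ\gamma_*^{\circ(N-1)}$ (note $\gamma_*^{\circ(N-1)}=\frac{w}{1-(N-1)Rw}$ has non-negative coefficients), then yields $|[w^{n+1}]\gamma_\mu^{\circ N}|\leq[w^{n+1}]\gamma_*^{\circ N}=(NR)^n$.

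Reading off the coefficient, $[w^{n+1}]\gamma_\mu^{\circ N}=[w^{n+1}]\gamma_{\mu^{\rhd N}}=m_n(\mu^{\rhd N})$, so the induction gives $|m_n(\mu^{\rhd N})|\leq(NR)^n$, as desired; the cases $N=0$ ($\mu^{\rhd0}=\delta_0$, $\gamma=\mathrm{id}$) and $N=1$ are immediate. I expect the main obstacle to be the careful justification of the composition law $\gamma_{\mu\rhd\nu}=\gamma_\mu\circ\gamma_\nu$ at the level of convergent power series rather than merely formally: one must ensure that the series involved share a positive radius of convergence near $w=0$ and that (\ref{rhd}), a priori an identity on $\comm$, transfers to the expansions at infinity, which is where Proposition \ref{prop1} and the closedness of $\mathcal{P}_1$ under $\rhd$ are essential. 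The combinatorial majorant step is routine once this is in place.
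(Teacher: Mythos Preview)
Your proof is correct and is essentially the same induction as the paper's: the paper works directly with the moment formula (\ref{moment}) for $\rho\rhd\sigma$ and bounds $|m_n(\rho\rhd\sigma)|$ by $(R_1+R_2)^n$ via the binomial identity $\sum_k\binom{n}{k}R_1^kR_2^{n-k}=(R_1+R_2)^n$, which is exactly your majorant step $\gamma_{R_1}\circ\gamma_{R_2}=\gamma_{R_1+R_2}$ read off coefficientwise. Your generating-function packaging (in particular the closed form $\gamma_*^{\circ N}(w)=w/(1-NRw)$) is a clean rephrasing of the same computation.
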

\begin{proof}
We prove this by induction. Assume that $|m_n(\mu^{\rhd N})| \leq (NR)^n$ for all $n \geq 0$. 
Since we complex moments via analytic continuation, it holds that 
\begin{equation}\label{moment}
\begin{split}
m_n(\rho \rhd \sigma) &= \sum_{k = 0} ^{n} \sum_{\substack{j_0 + j_1 + \cdots +j_k = n - k, \\  0 \leq j_l, ~0 \leq l \leq k }} m_k(\rho) m_{j_0}(\sigma)\cdots m_{j_k}(\sigma)   
\end{split}
\end{equation} 
for $\rho,~\sigma \in \mathcal{P}_1$ \cite{H-S}.  If $|m_n(\rho)| \leq R_1^n$ and $|m_n(\sigma)| \leq R_2^n$ for all $n$, then we have 
\begin{equation}\notag
\begin{split}
|m_n(\rho \rhd \sigma)| &\leq \sum_{k = 0} ^{n} \sum_{\substack{j_0 + j_1 + \cdots +j_k = n - k, \\  0 \leq j_l, ~0 \leq l \leq k }} R_1 ^k R_2 ^{n-k} \\
           &= (R_1 + R_2)^n,  
\end{split}
\end{equation}
since $\sum_{\substack{j_0 + j_1 + \cdots +j_k = n - k, \\  0 \leq j_l, ~0 \leq l \leq k }} = \frac{n!}{k!(n-k)!}$.  
We replace $\rho$ by $\mu$ and $\sigma$ by  $\mu^{\rhd N}$, and then the above inequality becomes 
\[
|m_n(\mu^{\rhd N+1})| \leq (N+1)^n R^n. 
\]
\end{proof}

\begin{thm}
Let $\mu$ be a probability measure in $\mathcal{P}_1$ with $a = \re \!\big( m_1(\mu)\big)$ and $b = \im \!\big( m_1 (\mu)\big)$.  
Then $\mu_N^M := (D_\frac{1}{N}\mu)^{\rhd N}$ converges weakly to the Cauchy distribution $\mu_{a,b}$. If $b=0$ we understand that 
$\mu_{a,0} = \delta_a$. 
\end{thm}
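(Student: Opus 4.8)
The plan is to establish weak convergence of $\mu_N^M$ to $\mu_{a,b}$ by showing convergence of the complex moments $m_n(\mu_N^M)$ to the complex moments $(a+bi)^n = m_n(\mu_{a,b})$, and then deducing weak convergence from this moment convergence together with a tightness or uniform-boundedness argument supplied by Lemma~\ref{lem1}. The key technical device is the monotone-convolution moment recursion (\ref{moment}) for $m_n(\rho \rhd \sigma)$ from \cite{H-S}, which plays exactly the role that the Fourier/Stieltjes characterizations played in the tensor, free, and Boolean cases.

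First I would record the behaviour of the complex moments under dilation, namely $m_n(D_c \mu) = c^n m_n(\mu)$, so that $m_1(D_{1/N}\mu) = N^{-1} m_1(\mu) = N^{-1}(a+bi)$ and more generally $m_n(D_{1/N}\mu) = N^{-n} m_n(\mu)$. Next I would apply the recursion (\ref{moment}) iteratively, setting $\rho = D_{1/N}\mu$ and building up $(D_{1/N}\mu)^{\rhd N}$ one convolution factor at a time, to obtain an expression for $m_n(\mu_N^M)$. The heart of the computation is to show that, after dividing the dilated moments by the appropriate powers of $N$, only the leading terms involving $m_1$ survive in the limit $N \to \infty$: each higher complex moment $m_k(\mu)$ with $k \geq 2$ carries a factor $N^{-k}$ from the dilation, and I expect the combinatorics of (\ref{moment}) to show these contribute $o(1)$, while the $n$-fold product of first moments assembles into $(m_1(\mu))^n = (a+bi)^n$. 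This is precisely the mechanism by which the Cauchy distribution arises, since its complex cumulants vanish beyond order one.

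To convert moment convergence into weak convergence I would use the uniform bound from Lemma~\ref{lem1}: if $|m_n(\mu)| \leq R^n$ for all $n$ (which holds for some $R$ by Proposition~\ref{prop13}), then $|m_n(D_{1/N}\mu)| \leq (R/N)^n$, whence $|m_n(\mu_N^M)| \leq (N \cdot R/N)^n = R^n$, a bound uniform in $N$. This uniform control of all complex moments, combined with the integral representation of Theorem~\ref{thm1} and the recovery of the Stieltjes transform on $\comm$, should let me pass to the limit in the analytic continuation $\widetilde{G}_{\mu_N^M}$ and identify the limiting transform with $\widetilde{G}_{\mu_{a,b}}$; locally uniform convergence of Stieltjes transforms on $\comm$ then yields weak convergence by the standard criterion (as invoked for the free case above).

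The main obstacle I anticipate is controlling the limit in (\ref{moment}) \emph{rigorously} rather than heuristically: the recursion is a nested sum whose depth grows with $N$, so I must verify that the error terms coming from the $m_k$ with $k \geq 2$ accumulate in a controlled way across all $N$ iterations and do not conspire to produce a nonvanishing contribution. Lemma~\ref{lem1} guarantees the moments stay bounded, but boundedness alone does not immediately give the \emph{exact} limit $(a+bi)^n$; I would need to track the leading-order asymptotics carefully, perhaps by an induction on $n$ showing $m_n(\mu_N^M) \to (a+bi)^n$ with an explicit $O(1/N)$ correction. A secondary subtlety is that complex moments are not genuine moments of a positive measure, so the classical moment-convergence-implies-weak-convergence theorem does not apply directly; this is why I route the final step through the Stieltjes transform on $\comm$ rather than through a moment-problem argument.
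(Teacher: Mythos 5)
Your overall architecture is the same as the paper's: prove $m_n(\mu_N^M) \to (a+bi)^n$, get the uniform bound $|m_n(\mu_N^M)| \leq R^n$ from Lemma \ref{lem1} applied to $D_{1/N}\mu$ (your computation $(N\cdot R/N)^n = R^n$ is exactly the paper's), identify the moment series with $G_{\mu_N^M}$ on $|z|>R$ via the analytic-continuation results, and conclude weak convergence from convergence of Stieltjes transforms (Theorem 2.5 of Maassen). The second half of your plan is therefore sound and is essentially what the paper does.

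The genuine gap is in the first half: you propose to prove the moment convergence by iterating the recursion (\ref{moment}) $N$ times and tracking the asymptotics, and you yourself flag that you cannot yet control the nested sums whose depth grows with $N$; that step is left as a heuristic. The paper closes precisely this gap by routing through the complex monotone \emph{cumulants} instead of the raw moments, which is what the machinery of Section \ref{Genmom} was built for. By Proposition \ref{prop3}(4) together with the dilation homogeneity, $K_n^M(\mu_N^M) = N\,K_n^M(D_{1/N}\mu) = N^{1-n}K_n^M(\mu)$, so $K_1^M(\mu_N^M) = m_1(\mu) = a+bi$ for every $N$ while $K_n^M(\mu_N^M) \to 0$ for all $n \geq 2$. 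Since the moment--cumulant formula (\ref{mmc}) expresses each $m_n$ as a fixed polynomial in the cumulants, cumulant convergence immediately gives $m_n(\mu_N^M) \to (a+bi)^n$, the moments of $\mu_{a,b}$, with no need to analyze nested sums of unbounded depth. In short: cumulants linearize the convolution, turning your ``main obstacle'' into a one-line computation; if you insist on working directly with (\ref{moment}), you would still effectively be re-deriving the statement $K_n^M(\mu^{\rhd N}) = N K_n^M(\mu)$, i.e.\ that the coefficient of $N^n$ in the polynomial $m_n(\mu^{\rhd N})$ is $(m_1(\mu))^n$, which is exactly what Proposition \ref{prop3} hands you for free.
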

\begin{proof}
Applying \ref{prop3}, we get 
$K_1^M(\mu_N^M) = a + bi$ and $K_n^M(\mu_N^M)=N^{-(n-1)}K_n^M(\mu) \to 0$ as $N \to \infty$ for $n \geq 2$. The limit complex cumulants are the complex monotone cumulants of 
the Cauchy distribution $\mu_{a,b}$, and therefore $m_n(\mu_N^M)$ converges to $(a+bi)^n$ as $N \to \infty$. 
Therefore the generating function $G_{\mu_N^M}(z) = \sum_{n=0} ^\infty\frac{m_n(\mu_N^M)}{z^{n+1}}$ converges to 
$G_{\mu_{a,b}}(z) =\sum_{n=0} ^\infty\frac{(a+bi)^n}{z^{n+1}}$ in the sense of formal power series. 
Moreover, the estimation 
\[
\Big|\frac{m_n(\mu_N^M)}{z^{n+1}}\Big| \leq \frac{R^n}{|z|^{n+1}}
\]
holds for some $R > 0$ from Lemma \ref{lem1} and Proposition \ref{prop3}; this implies that $G_{\mu_N^M}(z)$ converges to $G_{\mu_{a,b}}(z)$ uniformly in $|z|\geq 2R$. 
Thus we obtain the weak convergence $\mu_N^M \to \mu_{a,b}$ using Theorem 2.5 in \cite{Maa}. 
\end{proof}

\section{Characterization of the set $\mathcal{P}_1$}\label{Cha}
In this section we characterize the class $\mathcal{P}_1$ in terms of analytic properties of $G_\mu(z)$ and its reciprocal $F_\mu(z) = \frac{1}{G_\mu(z)}$, and moreover in terms of $F_\mu^{-1}$ and Fourier transforms $\mathcal{F}_\mu(z)$.  Similarity between $\mathcal{P}_1$ and probability measures with compact supports will be understood from the characterizations; in particular, characterization by means of Fourier transforms results in a kind of Paley-Wiener theorem.  

We recall that $\mu \in \mathcal{P}_1$ has a density of the form $\sum_{n=2}^\infty \frac{a_n}{x^n}$ for sufficiently large $|x|$. 
Let $R_\mu$ be defined by 
\[R_\mu = \inf \{R >0; \mu|_{|x|\geq R}(dx) = \sum_{n=2}^\infty \frac{a_n}{x^n} dx \}. 
\] 
For instance, $R_{\mu_{0,b}} = b$. 
$\mu$ is compactly supported if and only if $a_n = 0$ for all $n$. If $\mu$ is compactly supported, then $R_\mu$ is 
equal to $\inf \{R>0, \supp \mu \subset [-R, R] \}$. We will prove that $R_\mu$ plays a role similar to the compactly supported case. 

We introduce notation and terminology. $\Omega^{-1}$ denotes the set $\{z \in \comp; \frac{1}{z} \in \Omega \} \subset \com+$, where $\Omega$ is the domain introduced in Section \ref{Lim}.  An entire function $f$ in $\comp$ is said to be of \textit{finite order} 
if there exist $0 \leq \rho < \infty$ and $C_1,C_2 > 0$ such that $|f(z)| \leq C_1 e^{C_2 |z|^{\rho}}$ for all $z \in \comp$. 
The infimum of such $\rho$ that the above $C_1,C_2$ exist is called the \textit{order} of $f$. If $f$ has an order $\rho$, then $f$ is said to be of \textit{finite type} if $\overline{\lim}_{a \to \infty}a^{-\rho}\log(\sup_{|z|\leq a}|f(z)|) < \infty$. The value of the limit superior is called the \textit{type} of $f$. We refer the reader to \cite{Lev} for more information on entire functions. 

\begin{thm}
Let $\mu$ be a probability measure on $\real$. The following conditions are equivalent. 
\begin{itemize}
\item[(1)] $\mu$ belongs to $\mathcal{P}_1$. 
\item[(2)] $G_\mu(\frac{1}{z})$ defined in $\com+$ has analytic continuation to $\{z \in \comp; |z| < r \}$ for some $r > 0$. 
\item[(3)] $F_\mu(\frac{1}{z}) - \frac{1}{z}$ defined in $\com+$ has analytic continuation to $\{z \in \comp; |z| < r \}$ for some $r > 0$. 
\item[(4)] $F_\mu^{-1}(\frac{1}{z})-\frac{1}{z}$ defined in $\Omega^{-1}$ has analytic continuation to $\{z \in \comp; |z| < r \}$ for some $r > 0$. 
\item[(5)] The Fourier transform $\mathcal{F}_\mu(z)$ defined in $z > 0$ is the restriction of an entire function $\widetilde{\mathcal{F}}_\mu(z)$ of order one and of finite type. 
\end{itemize}
Moreover,  we define $r_\mu(k)$ to be the supremum of the radius $r > 0$ in the statement ($k$) for $k = 2,~3$ and  
$\widetilde{R}_\mu:=\inf \{R > 0; \sup_{z \in \comp} \frac{|\widetilde{\mathcal{F}}_\mu(z)|}{e^{R|z|}} < \infty \}$. Then  $r_\mu(k) = R_\mu^{-1}$ 
and $\widetilde{R}_\mu = R_\mu$. The second relation implies that the type of  $\widetilde{\mathcal{F}}_\mu(z)$ is $R_\mu$.   
\end{thm}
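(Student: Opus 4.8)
The plan is to route everything through the Stieltjes transform, proving the core equivalence $(1)\Leftrightarrow(2)$ directly and then obtaining $(3)$ and $(4)$ from $(2)$ by elementary manipulations of $F_\mu=1/G_\mu$ and an inverse-function argument at infinity, while the Fourier characterization $(5)$ is handled separately as a Paley--Wiener statement. The quantitative identities $r_\mu(2)=r_\mu(3)=R_\mu^{-1}$ and $\widetilde R_\mu=R_\mu$ should emerge as the radii of convergence that appear along the way.

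First, $(1)\Rightarrow(2)$ is essentially Proposition \ref{prop1}: for $\mu=\nu+\lambda\in\mathcal P_1$ one has $G_\mu(z)=\sum_{n\ge0}m_n(\mu)z^{-n-1}$ convergent for $|z|>R$ whenever $R>R_\mu$, so $G_\mu(1/z)=\sum_{n\ge0}m_n(\mu)z^{n+1}$ continues $G_\mu(1/z)$ across $0$ to $\{|z|<1/R\}$, and letting $R\downarrow R_\mu$ gives $r_\mu(2)\ge R_\mu^{-1}$. For the converse $(2)\Rightarrow(1)$ I would invert: if the continuation exists on $\{|z|<r\}$, let $\widetilde G_\mu$ denote it; then $\widetilde G_\mu$ is analytic on $\{|w|>1/r\}$ with Laurent expansion $\sum_{n\ge0}b_n w^{-n-1}$ and $b_0=1$ since $\mu$ is a probability measure. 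Using $G_\mu(\bar w)=\overline{G_\mu(w)}$ together with analyticity of $\widetilde G_\mu$ across the real arcs $|x|>1/r$, the Stieltjes inversion formula gives that $\mu$ is absolutely continuous there with real-analytic density $p(x)=\tfrac1\pi\im\widetilde G_\mu(x)=\sum_{n\ge2}\frac{\im b_{n-1}}{\pi}x^{-n}$ (the $x^{-1}$ term drops because $\im b_0=0$). This density is automatically non-negative and convergent for $|x|>1/r$, so splitting $\mu$ at $|x|=1/r$ exhibits $\mu\in\mathcal P_1$ and shows $R_\mu\le 1/r$, i.e. $r_\mu(2)\le R_\mu^{-1}$; hence $r_\mu(2)=R_\mu^{-1}$. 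Since $r_\mu(2)$ equals the radius of convergence of the power series $\sum_n m_n(\mu)z^n$, this simultaneously yields $R_\mu=\limsup_n|m_n(\mu)|^{1/n}$, the identity quoted in the remark after Proposition \ref{prop13}.

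Next, $(2)\Leftrightarrow(3)$ is algebra: writing $G_\mu(1/z)=z\,h(z)$ with $h$ analytic near $0$ and $h(0)=b_0=1\ne0$, the reciprocal satisfies $F_\mu(1/z)-1/z=\tfrac1z\big(\tfrac1{h(z)}-1\big)$, which is analytic near $0$ because $1/h(z)-1=O(z)$, and the same computation run backwards recovers $(2)$ from $(3)$ with the same radius, so $r_\mu(3)=r_\mu(2)=R_\mu^{-1}$. For $(3)\Leftrightarrow(4)$ I would apply the analytic inverse function theorem at the point $\infty$: in the coordinate $s=1/w$ the map $s\mapsto 1/F_\mu(1/s)$ is, by $(3)$, analytic at $s=0$ with value $0$ and derivative $1$, hence has an analytic local inverse; transported back this is precisely $F_\mu^{-1}$ near $\infty$ (it agrees with the given right inverse on $\Omega$ by uniqueness), and unwinding the coordinates shows $F_\mu^{-1}(1/z)-1/z$ continues analytically across $0$; the reverse implication is symmetric. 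This closes the chain $(1)\Leftrightarrow(2)\Leftrightarrow(3)\Leftrightarrow(4)$.

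Finally $(1)\Leftrightarrow(5)$ is the Paley--Wiener part. The direction $(1)\Rightarrow(5)$ is Proposition \ref{prop2} plus estimates: with $\nu$ supported in $[-R_1,R_1]$ one has $|\mathcal F_\nu(z)|\le\nu(\real)e^{R_1|z|}$, while the contour representation $\widetilde{\mathcal F}_\lambda(z)=\int_{C_R}e^{iz/u}p(1/u)u^{-2}\,du$ gives $|\widetilde{\mathcal F}_\lambda(z)|\le C e^{R|z|}$ because $|1/u|=R$ on $C_R$; choosing $R\downarrow R_\mu$ with $\nu$ supported in $[-R,R]$ shows $\widetilde{\mathcal F}_\mu$ has order one and type $\le R_\mu$. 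Since $\widetilde{\mathcal F}_\mu(z)=\sum_n\frac{m_n(\mu)}{n!}(iz)^n$, the standard formula for the type of an order-one entire function in terms of its Taylor coefficients gives type $=\limsup_n|m_n(\mu)|^{1/n}=R_\mu$, whence $\widetilde R_\mu=R_\mu$. The genuinely hard direction is $(5)\Rightarrow(1)$: from the resolvent identity $\tfrac1{w-x}=i\int_0^\infty e^{-is(w-x)}\,ds$ one gets $G_\mu(w)=i\int_0^\infty e^{-isw}\mathcal F_\mu(s)\,ds$ on the lower half-plane $\{\im w<0\}$, and I would extend $G_\mu$ to $\{|w|>\tau\}$ (with $\tau$ the type) by rotating the contour $[0,\infty)$ into rays $e^{-i\theta}[0,\infty)$, using that the exponential type controls the growth of $\widetilde{\mathcal F}_\mu$ along each ray so that the rotated integrals converge and glue into an analytic continuation. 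This produces the continuation required in $(2)$ with $r=1/\tau$, hence $\mu\in\mathcal P_1$. I expect the main obstacle to be exactly this last step: controlling the growth of the entire extension along complex rays (its indicator function) rather than merely on the positive axis, in order to justify the contour rotation and pin the continuation domain to $\{|w|>\tau\}$. This is where a Phragm\'en--Lindel\"of argument in the spirit of \cite{Lev} is needed, and where the positivity of $\mu$ (boundedness of $\mathcal F_\mu$ on $\real$) must be exploited.
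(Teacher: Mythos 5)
Most of your proposal coincides with the paper's own argument. Your $(1)\Leftrightarrow(2)$ is exactly the paper's: Proposition \ref{prop1} in one direction, and in the other the Stieltjes--Perron inversion applied to the continued function, giving the density $\frac{1}{\pi}\sum_{n\geq 1}(\im d_n)\,x^{-n-1}$ with automatic non-negativity. Your $(2)\Leftrightarrow(3)$ is the same reciprocal algebra, and your $(3)\Leftrightarrow(4)$ is the paper's argument in disguise: the map $s\mapsto 1/F_\mu(1/s)$ that you invert is literally the paper's $g(z)=G_\mu(1/z)$, and your ``transport back'' is the paper's identity $\frac{1}{g^{-1}(z)}-\frac{1}{z}=F_\mu^{-1}(\frac{1}{z})-\frac{1}{z}$. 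Your $(1)\Rightarrow(5)$ also matches (Propositions \ref{prop2} and \ref{prop13}). The genuine divergence is in $(5)\Rightarrow(1)$: the paper stays on the Fourier side, rotating the contour in $\int_0^N e^{-t\xi}\widetilde{\mathcal{F}}_\mu(-i\xi)\,d\xi$ and invoking L\'evy's inversion theorem to produce the density of $\mu$ on $|t|>R$ directly, whereas you continue $G_\mu$ itself via the Laplace representation $G_\mu(w)=i\int_0^\infty e^{-isw}\mathcal{F}_\mu(s)\,ds$ and then reduce to the already-proved $(2)\Rightarrow(1)$.

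As written, though, your $(5)\Rightarrow(1)$ is a plan rather than a proof: the rotation, the indicator estimate, and the gluing are all deferred (``I would extend\dots'', ``this is where a Phragm\'en--Lindel\"of argument is needed''). That is the one genuine gap, and it can be closed \emph{without} any contour rotation or Phragm\'en--Lindel\"of. Write $\widetilde{\mathcal{F}}_\mu(z)=\sum_n a_n z^n$ and let $\tau$ be the type. The Cauchy--Stirling estimate (the same coefficient step the paper uses) gives $n!\,|a_n|\leq C_\varepsilon (\tau+\varepsilon)^n$, so the Borel series $B(u)=\sum_n n!\,a_n u^{-n-1}$ is analytic on $\{|u|>\tau\}$. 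The Laplace transform $L(u)=\int_0^\infty e^{-us}\mathcal{F}_\mu(s)\,ds$ is analytic on $\{\re u>0\}$ because $|\mathcal{F}_\mu|\leq 1$ on $[0,\infty)$, and Fubini gives $L=B$ on $\{\re u>\tau\}$; by the identity theorem they agree on the connected overlap and hence glue to one analytic function on $\{\re u>0\}\cup\{|u|>\tau\}$. Since $G_\mu(w)=iL(iw)$ on $\comm$, this continues $G_\mu$ to $\{|w|>\tau\}$; the singularity of $G_\mu(1/z)$ at $z=0$ is removable because $B(u)=O(1/|u|)$, so $(2)$ holds with $r$ arbitrarily close to $1/\tau$, and together with the estimates in $(1)\Rightarrow(5)$ this pins down $\widetilde{R}_\mu=R_\mu$. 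Incidentally, your instinct about where the danger lies is correct in a stronger sense than you state: the paper's own proof asserts that the integral over $\Gamma_N$ tends to zero, and this needs exactly the off-axis growth control you worry about; as stated it fails whenever $\mathcal{F}_\mu(\xi)$ does not tend to $0$ as $\xi\to+\infty$ (e.g.\ when $\mu$ has an atom: the constant part of $\widetilde{\mathcal{F}}_\mu$ contributes $(e^{-tN}-e^{-itN})/t$ on the arc, which oscillates). Your Stieltjes-side route, completed as above, avoids pointwise Fourier inversion and this issue altogether.

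One further correction: your claim that $(3)\Rightarrow(2)$ ``runs backwards with the same radius'', hence $r_\mu(3)=r_\mu(2)$, is not valid. The backward map $G_\mu(1/z)=z/(1+zk(z))$ loses analyticity at zeros of $1+zk(z)$. Concretely, for the Cauchy law $\mu_{a,b}$ one has $F_\mu(1/z)-1/z\equiv -(a+ib)$ on the relevant domain, which is entire, so $r_\mu(3)=\infty$, while $r_\mu(2)=(a^2+b^2)^{-1/2}$. This defect is inherited from the theorem's ``moreover'' clause itself (whose one-line justification in the paper does not address it), but your argument reproduces it rather than repairs it; the clean statements are $r_\mu(2)=R_\mu^{-1}$ and $\widetilde{R}_\mu=R_\mu$.
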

\begin{proof}
\textbf{1.}  We show the equivalence between (1) and (2). 
With Proposition \ref{prop1}, (1) implies (2). Conversely, we assume (2). Then we have the expansion 
\[
G_\mu \Big(\frac{1}{z}\Big) = \sum_{n=0}^\infty d_n z^{n+1} 
\]
for $|z| < r$. $d_0 = 1$ since $z G_\mu(z) \to 1$ in the non tangential limit $z \to \infty$, $z \in \comm$. 

We take any $R > r^{-1}$. 
Clearly $\frac{1}{\pi}\im G_\mu (x - iy)$ converges to $p(x):=\sum_{n=1}^\infty \frac{\im d_n}{x^{n+1}}$ 
locally uniformly on $|x| \geq R$ as $y \searrow 0$. Then $\mu$ has an absolutely continuous density $p(x)$ on $|x| \geq R$ by using the Stieltjes-Perron inversion formula \cite{Akh}. Therefore,   
 $\mu$ is of the form $\mu = \mu|_{[-R, R]} + \lambda$, where $\lambda = p(x)dx$ on $|x| \geq R$. 

\textbf{2.}  The equivalence between (2) and (3) is immediate from simple computation. 

\textbf{3.}  The equivalence between (2) and (4) is proved as follows. Let $g(z):=G_\mu(\frac{1}{z})$ for $z \in \com+$. 
We have the identity 
\[
\frac{1}{g^{-1} (z)} - \frac{1}{z} = F^{-1}_\mu\Big(\frac{1}{z}\Big) - \frac{1}{z} 
\] 
for $z \in \Omega^{-1}$; this enables us to prove the equivalence. More precisely, we can show that (2) is equivalent to (2') which states that $g^{-1}(z)$ defined in $\Omega^{-1}$ has analytic continuation to $\{z \in \comp; |z| < r \}$ for some $r > 0$. Using this we can prove the equivalence between (2) and (4).  

\textbf{4.}  We prove the equivalence between (1) and (5). If (1) holds, the first statement in (5) follows from Proposition \ref{prop2}. The second follows from the estimation in Proposition \ref{prop13}. 

Conversely we assume that (5) holds. Let $c_n:= \frac{d^n \widetilde{F}}{d \xi^n}(0)$ for $n \geq 0$; in particular $c_0 = 1$. Then for each $A >R$, there exists $C$ depending on $A$ such that $|c_n| \leq CA^n$ for all $n$. This is proved as follows. 
The Cauchy's integral formula implies that $|c_n| \leq \frac{n! e^x R^n}{x^n}$ for any $x > 0$ and $n$. Since $\frac{e^x}{x^n}$ takes its minimum value at $n$, we take $x=n$. If $A > R$, we can prove that $|c_n| < A^n$ for large $n$ applying Stirling's approximation.   
 
For $t > R$ and $N > 0$ we define $f^{+}_N (t)$ by 
\[
f_N^{+}(t) = \frac{1}{2\pi i} \int_0 ^N e^{-t\xi} \widetilde{\mathcal{F}}_\mu (-i\xi)d\xi. 
\] 
$f_N^{+}(t)$ converges to $f^{+}(t):= \frac{1}{2\pi i}\sum_{n=0}^\infty \frac{c_n}{t^{n+1}}$ as $N \to \infty$ locally uniformly in $(R, \infty)$. 

Changing the path,  we obtain 
\[
f_N^{+}(t)  = \frac{1}{2\pi}\int_0 ^N e^{-it\xi} \mathcal{F}_\mu (\xi)d\xi -  \int_{\Gamma_N} e^{-t\xi} \widetilde{\mathcal{F}}_\mu (-i\xi)d\xi,   
\]
where $\Gamma_N$ is a curve defined by $\Gamma_N = \{Ne^{i\theta}; 0 \leq \theta \leq \frac{\pi}{2}\}$. The integral over the path 
$\Gamma_N$ converges to $0$ as $N \to \infty$ locally uniformly.  Therefore, we obtain 
\[
\frac{1}{2\pi}\int_0 ^N e^{-it\xi} \mathcal{F}_\mu (\xi)d\xi \to  \frac{1}{2\pi i}\sum_{n=0}^\infty \frac{c_n}{t^{n+1}} 
\]
locally uniformly in $(R, \infty)$. By summing up the complex conjugate,  
\[
\frac{1}{2\pi}\int_{-N} ^N e^{-it\xi} \mathcal{F}_\mu (\xi)d\xi  \to  \frac{1}{\pi}\sum_{n=0}^\infty \frac{\im c_n}{t^{n+1}} 
\]
locally uniformly. Similarly we can prove the above convergence for $t < -R$: we have only to replace $f_N^{+}(t)$ by 
$f_N^{-}(t):= \frac{i}{2\pi } \int_0 ^N e^{t\xi} \widetilde{\mathcal{F}}_\mu (i\xi)d\xi$.  
Now L\'{e}vy's inversion formula (see Theorem 6.2.1 in \cite{KLC} for instance) implies that $\mu$ has the absolutely continuous density $\frac{1}{\pi}\sum_{n=0}^\infty \frac{\im c_n}{t^{n+1}}$ 
for $|t| > R$, which completes the proof.  

\textbf{5.}  The equalities of $r_\mu(k)$ and $\widetilde{R}_\mu$ are easily proved by using the above proofs of \textbf{1}, \textbf{2}, \textbf{4} 
and Theorem \ref{thm1} and Proposition \ref{prop13}.  
\end{proof}
\begin{cor}\label{cor1}
$\mathcal{P}_1$ is closed under the convolutions $\ast$, $\boxplus$, $\uplus$ and $\rhd$. 
\end{cor}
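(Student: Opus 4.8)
The plan is to invoke the equivalent characterizations of $\mathcal{P}_1$ from the preceding theorem, selecting for each of the four convolutions whichever characterization \emph{linearizes} its defining functional equation (\ref{ast})--(\ref{rhd}). The two additive cases are then immediate. For the Boolean convolution I would use condition~(3): by (\ref{uplus}) we have $F_{\mu_1\uplus\mu_2}(z)=F_{\mu_1}(z)+F_{\mu_2}(z)-z$ on $\comm$, and substituting $z\mapsto 1/z$ gives
\[
F_{\mu_1\uplus\mu_2}\Big(\tfrac{1}{z}\Big)-\tfrac{1}{z}
=\Big(F_{\mu_1}\big(\tfrac{1}{z}\big)-\tfrac{1}{z}\Big)
+\Big(F_{\mu_2}\big(\tfrac{1}{z}\big)-\tfrac{1}{z}\Big),
\]
so the left side inherits an analytic continuation to a disc about the origin from the two summands; hence $\mu_1\uplus\mu_2$ satisfies (3) and lies in $\mathcal{P}_1$. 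The free case is formally identical with condition~(4) replacing~(3): relation (\ref{boxplus}) gives $F_{\mu_1\boxplus\mu_2}^{-1}(1/z)-1/z=(F_{\mu_1}^{-1}(1/z)-1/z)+(F_{\mu_2}^{-1}(1/z)-1/z)$ on $(\Omega_1\cap\Omega_2)^{-1}$, and each summand continues analytically, once one observes that the common domain still accumulates at $0$, so that continuation from the half-plane to a neighbourhood of the origin is legitimate.

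For the tensor convolution I would use condition~(5). If $\mu_1,\mu_2\in\mathcal{P}_1$, there are $C_j,R_j>0$ with $|\widetilde{\mathcal{F}}_{\mu_j}(z)|\le C_j e^{R_j|z|}$, and by (\ref{ast}) the product $\widetilde{\mathcal{F}}_{\mu_1}\widetilde{\mathcal{F}}_{\mu_2}$ is an entire continuation of $\mathcal{F}_{\mu_1\ast\mu_2}$. The bound $|\widetilde{\mathcal{F}}_{\mu_1}(z)\widetilde{\mathcal{F}}_{\mu_2}(z)|\le C_1C_2\,e^{(R_1+R_2)|z|}$ shows this continuation is entire of finite exponential type (order at most one, finite type), so condition~(5) applies and $\mu_1\ast\mu_2\in\mathcal{P}_1$; in the nondegenerate case the type is $R_1+R_2$ and the order is exactly one.

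The monotone convolution is the main obstacle, since (\ref{rhd}) presents $F_{\mu_1\rhd\mu_2}$ as a \emph{composition} rather than a sum. I would again verify condition~(3). Put $h_j(z):=F_{\mu_j}(1/z)-1/z$, which by (3) for $\mu_j$ is analytic on a disc about $0$ and, by the Nevanlinna expansion $F_{\mu_j}(w)=w-m_1(\mu_j)+o(1)$, tends to $-m_1(\mu_j)$ as $z\to 0$. Writing $W:=F_{\mu_2}(1/z)=1/z+h_2(z)$, the crux is that
\[
\zeta(z):=\frac{1}{W}=\frac{z}{1+z\,h_2(z)}
\]
is analytic near $0$ and \emph{vanishes} there, because $z\,h_2(z)\to 0$. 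Since condition~(3) for $\mu_1$ reads $F_{\mu_1}(W)-W=h_1(1/W)=h_1(\zeta(z))$, the composition $h_1\circ\zeta$ is analytic near $0$, whence
\[
F_{\mu_1\rhd\mu_2}\Big(\tfrac{1}{z}\Big)-\tfrac{1}{z}
=\big(F_{\mu_1}(W)-W\big)+\big(W-\tfrac{1}{z}\big)
=h_1(\zeta(z))+h_2(z)
\]
extends analytically to a disc about the origin. These identities hold first on the part of $\mathbb{C}_+$ where (\ref{rhd}) is valid and persist by the identity theorem, so $\mu_1\rhd\mu_2$ satisfies (3) and belongs to $\mathcal{P}_1$. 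The one genuine subtlety shared by all four cases is the domain bookkeeping---confirming that each functional equation holds on a region whose image under $z\mapsto 1/z$ accumulates at the origin, so that the continuations glue correctly---and this is most delicate precisely here, resting on the vanishing of $\zeta$ at $0$.
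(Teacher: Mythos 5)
Your proposal is correct and takes essentially the same approach as the paper: the paper's entire proof is the one-line remark that closure is immediate from the characterization theorem combined with the defining relations (\ref{ast})--(\ref{rhd}), and your case-by-case verification (condition (3) for $\uplus$, (4) for $\boxplus$, (5) for $\ast$, and (3) again for $\rhd$) is exactly the detail that remark suppresses. Your treatment of the monotone case, with $\zeta(z)=z/(1+z\,h_2(z))$ analytic and vanishing at the origin so that $h_1\circ\zeta$ is analytic there, correctly supplies the domain bookkeeping the paper leaves to the reader.
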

\begin{proof}
This is immediate since the convolutions are characterized as (\ref{ast})-(\ref{rhd}). 
\end{proof}


\section*{Acknowledgements} 
The author would like to express his sincere thanks to Mr. Hayato Saigo for many suggestions of importance of stable distributions,  
especially Cauchy distributions, and for discussions of limit theorems, and also suggestions of further directions in the future. 
The author is also grateful very much to Professor Izumi Ojima for informing him many uses of Cauchy distributions in Physics, and for suggestions of applications 
of complex moments to Physics, which motivated the author greatly. Also he would like to thank Mr. Kazuya Okamura for discussions 
about Cauchy distributions in statistics and applications of complex moments to statistics. 
This work was supported by JSPS (KAKENHI 21-5106).

\end{document}